\newtheorem{theorem}{Theorem}[section]
\newtheorem{lemma}[theorem]{Lemma}
\theoremstyle{definition}
\newtheorem{remark}[theorem]{Remark}
\newtheorem{hypothesis}[theorem]{Hypothesis}
\newtheorem{corollary}[theorem]{Corollary}
\newcommand{\R}{\mathbb{R}}
\newcommand{\N}{\mathbb{N}}
\newcommand{\sym}{{\rm sym}}
\newcommand{\fz}{\frac}
\newcommand{\prz}[2]{ \frac{\partial{#1}}{\partial{#2}} }
\newcommand{\pz}{\partial}
\title{
Stability estimates and a Lagrange--Galerkin scheme for a Navier--Stokes type model of flow in non-homogeneous porous media
}
\author{
Imam Wijaya\footnote{Corresponding author: {\tt robiah.wijaya239@gmail.com}}
\medskip\\
Division of Mathematical and Physics Science, Kanazawa University,\\
Kakuma, Kanazawa 920-1192, Japan.\\
\bigskip\\
Hirofumi Notsu
\medskip\\
Faculty of Mathematical and Physics, Kanazawa University,\\
Kakuma, Kanazawa 920-1192, Japan.
\smallskip\\
Japan Science and Technology Agency, PRESTO,\\
Kawaguchi 332--0012, Japan.
}
\date{}
\begin{document}
\maketitle
\begin{abstract}
The purposes of this work are to study the $L^{2}$-stability of a Navier--Stokes type model for non-stationary flow in porous media proposed by Hsu and Cheng in 1989 and to develop a Lagrange--Galerkin scheme with the Adams--Bashforth method to solve that model numerically.
The stability estimate is obtained thanks to the presence of a nonlinear drag force term in the model which corresponds to the Forchheimer term.
We derive the Lagrange--Galerkin scheme by extending the idea of the method of characteristics to overcome the difficulty which comes from the non-homogeneous porosity.
Numerical experiments are conducted to investigate the experimental order of convergence of the scheme.
For both simple and complex designs of porosities, our numerical simulations exhibit natural flow profiles which well describe the flow in non-homogeneous porous media.
\end{abstract}
%
%
%
%
%
%
%
\section{Introduction}
Fluid flow in porous media has received considerable attention in many kinds of applications such as in geophysics, petroleum engineering, and geothermal engineering, cf., e.g.,~\cite{1,4,13}.
In geothermal engineering, simulation of fluid flow and heat transfer in porous media is a useful tool not only for the pre-exploration process but also during the exploration process. For the pre-exploration process, the simulation can be used to predict how much electricity can be produced and determine how long the reservoir can be explored by using the physical parameters such as pressure, temperature, density, porosity, size of the reservoir, and the type of reservoir obtained from seismic data as an input parameter. From this simulation, we can determine the feasibility of a reservoir to be explored. During the exploration, simulation is used to predict the pressure and temperature changes in the reservoir because of injection and extraction processes. Injection is needed to maintain the balance of the mass in the reservoir and to supply the water which will be heated by the reservoir. In the extraction process, the fluid and steam are produced from the reservoir and used to generate electricity.
\par
For the underground flow, the so called Darcy law~\cite{4} is widely employed. However, the Darcy law is not appropriate in the geothermal application, since the porosity is non-homogeneous and the flow is non-stationary due to injection and extraction processes. 
\par
The analysis of fluid flow in porous media was started from H.~Darcy. In 1856 he observed the water flow in packed sand. His experiments were performed with a constant temperature single fluid and homogeneous porous media. According to his experiment, he concluded that the fluid velocity is proportional to pressure gradient. Then resulting Darcy equation in one dimensional case is 
\[
u = -k_D\frac{\partial p}{\partial x},
\]
where $u$ is the so called Darcy velocity, cf.~\eqref{da} below, $k_D$ is the Darcy permeability, $p$ is the pressure, and $x$ is the spatial coordinate.
To accommodate the thermal effect in Darcy's equation, A. Hazen~\cite{16} introduced the specific permeability $K$ and showed that the Darcy permeability is given by $k_D=\frac{K}{\mu}$, where $\mu$ is the temperature dependent dynamic viscosity. J.~Kozeny and P.C.~Carman gave a concrete form of the specific permeability $K$ in terms of the porosity $\phi$ and the particle diameter $d_p$ later.
\par
Darcy's law is the basic equation for modeling steady flow in porous media. This law assumes that the viscous forces dominate over inertial forces in porous media; hence, the inertial forces can be neglected. In the application where the permeability and porosity of the media are small such as in the groundwater and petroleum flows~\cite{4,13}, Darcy's law has an excellent performance to describe that phenomenon. However, in the application where the permeability and porosity of the medium are significantly large such as in the geothermal system, Darcy's law failed to describe it~\cite{5,17,24,12}.
\par
To improve Darcy's law, in 1947, H.C.~Brinkman added viscosity term which represents the shear stress term, and proposed the Brinkman equation~\cite{27}:      
\[
\frac{dp}{dx}=\mu\frac{\partial^{2}u}{\partial x^{2}}-\frac{\mu}{K}u.
\]
In the case of small porosity and permeability, the viscosity effect in pore throat is small, then the Brinkman equation is reduced to Darcy's law~\cite{24}. The Brinkman equation describes the transport processes in the porous media more generally than Darcy's equation. However, it only can be applied in a steady state.
\par
J.~Dupuit (1863) and P.~Forchheimer (1901) empirically found that as the flow rate increases, the inertial forces become significantly large, and the relationship between the pressure drop and velocity becomes nonlinear~\cite{24}.
With that fact, J.~Dupuit and P.~Forchheimer added a quadratic term of the velocity to represent the microscopic inertial effect, then resulting the Brinkman--Forchheimer equation:
\[
\frac{dp}{dx}=\mu\frac{\partial^{2}u}{\partial x^{2}}-\frac{\mu}{K}u-\beta\rho u^2,
\]
where $\beta=\frac{F\phi}{\sqrt{K}}$ is the non-Darcy coefficient, $F$ is the Forchheimer constant, $\phi$ is the porosity, and $\rho$ is the density of the fluid. This equation is more general than the Brinkman equation, but again, it is only applied in steady a state.
\par
S.~Whitaker (1967) introduced the volume average method to relates the volume average of a spatial derivative to the spatial derivative of the volume average, and makes the transformation from microscopic equations to macroscopic equations possible~\cite{12}.
C.T.~Hsu and P.~Cheng (1989) applied the volume average in the representative elementary volume (REV) to derive the equation for fluid flow in porous media. In their equation, they represented the drag force with Ergun's relation~\cite{9,7,15}. This approximation can be used to model the fluid flow in a geothermal reservoir for non-stationary condition.
\par
The purposes of this work are to study the $L^{2}$-stability of a Navier--Stokes type model for non-stationary flow in porous media proposed by C.T.~Hsu and P.~Cheng in 1989 and to develop a Lagrange--Galerkin scheme with the Adams--Bashforth method to solve that model numerically. A manufactured solution is employed to investigate the experimental order of convergence of the scheme in Subsection~\ref{subsec:eoc}.
To check the agreement of our simulation with the reality of fluid flow in porous media qualitatively, we set two cases of simulation and present the results in Subsection~\ref{subsec:nonhomo_porosity}.
\section{Governing equations}\label{sec:eqns}
C.T.~Hsu and P.~Cheng~\cite{7} reported the macroscopic continuity of mass and momentum equation for fluid flow in porous media based on the average of the microscopic continuity of mass and momentum over the REV. In this technique, the ``average theorems'' proposed by S.~Whitaker and J.C.~Slattery are needed to relate the average of the derivative to the derivative of average~\cite{9,11,15}. In this section, we will briefly review the ``average theorems'' for subsequent derivations.
\begin{figure}[htp]
	\centering
	\includegraphics[width=4.5in]{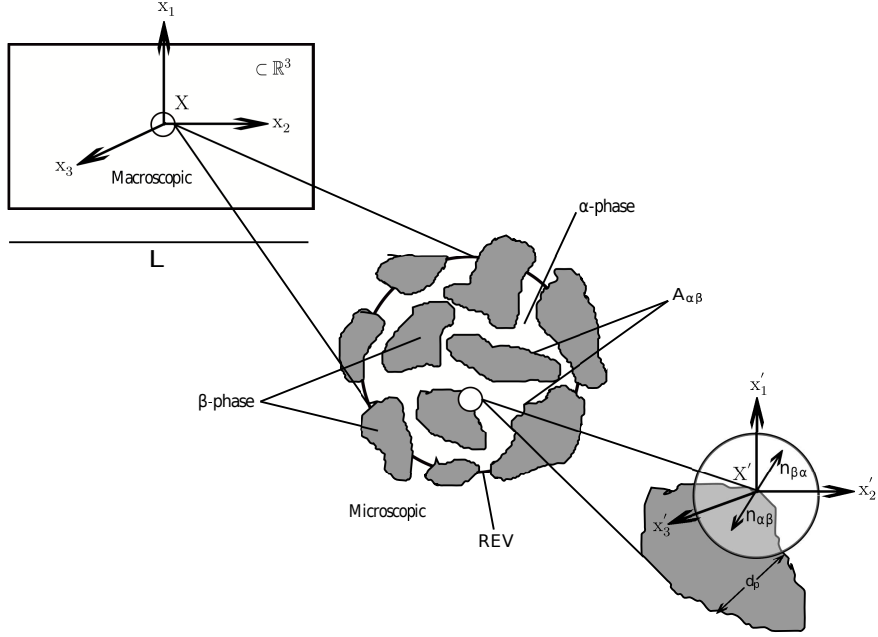}
	\caption{Representative elementary volume (REV)}
	\label{fig:REV}
\end{figure}
\par
Let us consider the porous media composed of $\alpha$ and $ \beta$ phases which represent fluid and solid, respectively. Let $\Omega\subset\mathbf{\R}^3$ be a bounded (macroscopic) domain. For $x\in\Omega$, let $V_{\alpha}(x)$ and $V_{\beta}(x)$ be microscopic volumes of $\alpha$ and $ \beta$ phases, respectively, and let $V(x) := V_{\alpha}(x) \cup V_{\beta}(x)\subset\mathbf{\R}^3$ be an REV satisfying $|V(x)|=|V_{\alpha}(x)|+|V_{\beta}(x)|<\infty$, where $|V_{\alpha}(x)|$ represents the measure of $V_{\alpha}(x)$. We assume $|V(x)|$ is constant and is denoted by $|V|$ and the porosity is given by $\phi(x)=\frac{|V_\alpha(x)|}{|V|}\in (0,1]$.
We denote by $v'=v'(x',x)\in \R^3$  the microscopic velocity at $ x'\in V_\alpha(x)$, where $x'$ is denotes the coordinates of $V_\alpha(x)$.  Then we introduce the macroscopic average velocity by averaging $v'$ over $V_\alpha(x)$:
\[
\langle v'\rangle=\frac{1}{|V_{\alpha}(x)|}\int_{V_{\alpha}(x)} v'(x',x)dx'.
\]
\par
The ``average theorems'' assume the total macroscopic source of the system at a point $x$ is equal to the total microscopic source to the system at a point $x'$ and total flux through the surface $A_{\alpha\beta}$, see Fig.~\ref{fig:REV}. Then this assumption yields 
\begin{equation}\label{5}
\nabla\cdot \biggl[ \frac{1}{|V|}\int_{V_{\alpha}}v'dx' \biggr] = \frac{1}{|V|} \int_{V_{\alpha}}\nabla'\cdot v'dx'+\frac{1}{|V|}\int_{A_{\alpha\beta}}v'\cdot n_{\beta\alpha} \, ds,
\end{equation}
where $n_{\beta\alpha}$ is the unit normal vector from $\beta$-phase to the $\alpha$-phase and $ds$ is the arc-length on the interface $A_{\alpha\beta}$.
In other words, we assume
\[
\nabla\cdot(\phi \langle v' \rangle) =\phi\langle \nabla'\cdot v'\rangle+\frac{1}{|V|}\int_{A_{\alpha\beta}}v'\cdot n_{\beta\alpha} \, ds.
\]
\par
For the time-dependent case, S.~Whitaker and J.C.~Slattery assumed the microscopic velocity $v'(x',x,t)$ and pressure $p(x',x,t)$ are governed by the Navier--Stokes equations in $V_\alpha(x)$, and derived its macroscopic equations in porous media  by taking the average in REV.
The ``average theorems'' assumption as given in~\eqref{5} yields  
\begin{align*}
\rho\left[\frac{\partial u}{\partial t}+(u\cdot\nabla)\frac{u}{\phi} \right] & = -\nabla p+\mu\Delta u + B(u, \phi),\\
\nabla\cdot u & = 0,
\end{align*}
where $u$ and $p$ are the superficial macroscopic velocity and pressure defined by 
\[
u(x,t):=\frac{1}{|V|}\int_{V_{\alpha}(x)}v'(x',x,t)dx',
\qquad
p(x,t):=\frac{1}{|V|}\int_{V_{\alpha}(x)}p'(x',x,t)dx'.
\]
We remak that these superficial quantities are represented by their macroscopic average $\langle v' \rangle$ and $\langle p' \rangle$ as follows:
\begin{equation}
\label{da}
u(x,t)=\phi(x)\langle v'(\cdot, x,t)\rangle,
\qquad
p(x,t)=\phi(x)\langle p'(\cdot, x,t)\rangle.
\end{equation}
The superficial velocity~$u$ is called the Darcy velocity.
The term $B(u,\phi)$ represents the total drag force from the micro pore structure per unit volume which satisfies with S.~Ergun expression~\cite{9}:
\begin{equation}
\label{def:B}
B(u,\phi) = B(u,\phi; \mu, \rho,d_{p}) := - \frac{\mu\phi u}{K(\phi)} - \rho\frac{F(\phi)\phi \left|u \right| u}{\sqrt{K(\phi)}},
\end{equation}
where $F:(0,1]\to (0, \infty)$ and $K:(0,1]\to (0, \infty]$ are functions defined by
\begin{align}
F(\phi) & := \frac{b}{\sqrt{a\phi^3}}, &
K(\phi) & := \frac{d_p^2 \phi^3}{a (1-\phi)^2},
\label{def:F_and_K}
\end{align}
which correspond to Forchheimer constant and Kozeny--Carman absolute permeability, respectively.
The constant~$d_p$ is a particle diameter, see Fig.~\ref{fig:REV}, and the values of $a$ and $b$ are empirically given by $a=150$ and $b=1.75$ in~\cite{11,15}.
\par
To clearly understand about the notation and the unit of our symbols, we summarized the units of important symbols in Table~\ref{tab:table1} below.
\begin{table}[htbp]
    \caption{The unit of important symbols}
    \label{tab:table1}
    \centering  
    \begin{tabular}{cccc} 
        \hline \hline                        
        No & Symbol  &  Unit  &  Name of the symbol \\ [0.5ex]
        \hline  
            1&$u$ & ${\rm m} \cdot {\rm s}^{-1}$ & Darcy velocity \\
            2&$p$ & ${\rm kg} \cdot {\rm m}^{-1}\cdot {\rm s}^{-2}$ & Pressure\\
            3&$\phi$ & -- & porosity \\
            4&$k_{D}$ & ${\rm kg}^{-1} \cdot {\rm m}^3 \cdot {\rm s}$ & Darcy permeability \\
            5&$K$ & ${\rm m}^2$ & Permeability\\
            6&$\mu$ & ${\rm kg} \cdot {\rm m}^{-1}\cdot {\rm s}^{-1}$ & Dynamic viscosity \\
            7&$\rho$ & ${\rm kg} \cdot {\rm m}^{-3}$ & Density\\
            8&$d_{p}$ & ${\rm m}$ & Particle diameter\\
            9&$F$ & -- & Forchheimer constant\\
            10&$B$ & ${\rm kg} \cdot {\rm m}^{-2} \cdot {\rm s}^{-2}$ & Drag force per unit volume\\
        \hline 
    \end{tabular}
\end{table}
%
%
%
\section{Statement of the problem}
In this section, we introduce a mathematical framework for the model presented in Section~\ref{sec:eqns}.
\par
The notation to be used in this paper is as follows.
For $d=2, 3$, let $\Omega \subset \R^d$ be a bounded domain, $\Gamma$ the boundary of $\Omega$, and $T$ a positive constant.
$\Gamma$ is divided into three parts, $\Gamma_i$, $i=0, 1, 2$, which satisfy $\bar{\Gamma} = \bar\Gamma_0 \cup \bar\Gamma_1 \cup \bar\Gamma_2$ and $\Gamma_i \cap \Gamma_j = \emptyset$ for all $i \neq j$.
We suppose that $\Gamma$ is a Lipschitz boundary, and that, for each $i\in \{0, 1, 2\}$, $\Gamma_i$ is piecewise smooth, where the total number of the smooth boundaries of~$\Gamma_i$ is finite.
The Lebesgue space on $\Omega$ for $p\in[1,\infty]$ is denoted by $L^p(\Omega)$ and the Sobolev space $W^{1,2}(\Omega)$ is denoted by $H^1(\Omega)$ with the norm
\[
\|u\|_{H^1(\Omega)}:=\left(\|u\|^2_{L^2(\Omega)} +\|\nabla u\|^2_{L^2(\Omega)}\right)^{1/2}. 
\]
The vector- and matrix-valued function spaces corresponding to, e.g.,~$L^2(\Omega)$ are denoted by $L^2(\Omega)^d$ and $L^2(\Omega)^{d\times d}$, respectively.
The inner products in~$L^2(\Omega)$, $L^2(\Omega)^d$, and $L^2(\Omega)^{d\times d}$ are all represented by $(\cdot , \cdot)$.
\par
We consider the following problem governed by the Navier--Stokes equations with non-homogeneous porosity~\cite{7};
find $(u, p): \overline{\Omega} \times [0,T]\to \R^d \times \R$ such that
\begin{subequations}\label{prob}
\begin{align}
\rho \Bigl[ \prz{u}{t} + (u\cdot\nabla) \frac{u}{\phi} \Bigr] - \nabla\cdot \left[ 2\mu D(u)\right]  +\nabla p & = f + B(u,\phi) && \text{in} \ \Omega\times(0,T), \\
\nabla\cdot u & = 0 && \text{in} \ \Omega\times(0,T), \\
u & = g && \text{on} \ \Gamma_{0}\times(0,T), \\
2\mu D(u) n-p n & = 0 && \text{on} \ \Gamma_{1}\times(0,T), \\
[2\mu D(u)n-p n] \times n & = 0 && \text{on} \ \Gamma_{2}\times(0,T), \\
u\cdot n & = 0 && \text{on} \ \Gamma_{2}\times(0,T), \\
u & = u^{0} && \text{in} \ \Omega,\ \text{at}\ t=0,
\end{align}
\end{subequations}
where
$u$ is the Darcy velocity,
$p$ is the pressure,
$\mu >0$ is a dynamic viscosity,
$u^0: \Omega \to \R^d$ is a given initial velocity,
$f: \Omega \times (0,T) \to \R^d$ is a given external force,
$g: \Gamma_0 \times (0,T) \to \R^d$ is a given boundary velocity,
$\phi: \Omega \to (0, 1]$ is a given porosity,
$D(u): \Omega \times (0, T) \to \R^{d \times d}_\sym$ is the strain-rate tensor defined by 
\[
D(u) := \frac{1}{2} \Bigl[ \nabla u + (\nabla u)^T \Bigr],
\]
$B(u, \phi) = B(u, \phi; \mu, \rho, d_{p}): \Omega \times (0, T) \to \R^d$ is the total drag force defined in~\eqref{def:B} with~\eqref{def:F_and_K}, and $n: \Gamma \to \R^d$ is the outward unit normal vector.
On the boundary, we impose the Dirichlet boundary condition on $\Gamma_0$, the stress free boundary condition on $\Gamma_1$, and the slip boundary condition on $\Gamma_2$.
\par
Throughout this paper, the following two hypotheses are assumed to hold.
\begin{hypothesis}\label{hyp1}
We suppose that ${\rm meas}(\Gamma_0) > 0$, 
$f \in C([0, T]; L^2(\Omega)^d)$, 
$g \in C([0, T]; H^1(\Omega)^d)$, and  
$u^0 \in L^2(\Omega)^d$.
\medskip
\end{hypothesis}
\begin{hypothesis}\label{hyp2}
	The porosity satisfies the following.
	\begin{itemize}
		\item[$(i)$]
		$\phi \in W^{1,\infty}(\Omega)$, \quad $\displaystyle\phi_0:=\mathop{\rm ess.inf}_{x\in\Omega }\phi(x)>0$.
		\item[$(ii)$]
		$\displaystyle|\nabla\phi|\leq\frac{2b}{d_p}(1-\phi)$ a.e. in $ \Omega$.
	\end{itemize}
\end{hypothesis}
Let us introduce constants~$\phi_1$ and~$\alpha$ defined by
\[
\phi_1:=\mathop{\rm ess.sup}_{x\in\Omega }\phi(x)\leq 1 ,\qquad  \alpha:=\frac{a(1-\phi_1)^2}{d_{p}^2\phi_{1}^2}\geq0.
\]
We note that
\begin{align}
\mathop{\rm ess.inf}_{x\in\Omega} \frac{\phi(x)}{K(\phi(x))} \ge \alpha \geq0.
\label{ieq:alpha}
\end{align}
%
%
%
\begin{remark}
From Hypothesis~\ref{hyp1} and the Trace Theorem~\cite{19}, it holds that $g(\cdot, t)_{|\Gamma_0} \in H^{1/2}(\Gamma_0)^d$ for any $t\in [0, T]$.
\end{remark}
\begin{remark}
As an example the value of $|\nabla \phi|$ in Lavrans field, Halten Terrace, Norway~\cite{28} is $4.336 \times 10^{-5}$ [cm$^{-1}$]. In the real situation, the value of $d_{p}\leq0.02$ [cm] and from the empirical study, S.~Ergun~\cite{9} suggested the value of $b=1.75$. Then if we calculate the right hand side term in Hypothesis~\ref{hyp2}-$(ii)$, it resulted $157.5$ [cm$^{-1}$]. Obviously, the spatial derivative of the real porosity $\nabla \phi(x)$ satisfies $|\nabla \phi| \ll 157.5$ [cm$^{-1}$]. By this fact, Hypothesis~\ref{hyp2}-$(ii)$ is not strict. 
\end{remark}
\par
For a function~$g_0\in H^{1/2}(\Gamma_0)^d$, let us introduce function spaces~$V(g_0)$, $V$, and~$Q$ defined by
\begin{align*}
V(g_0) := \bigl\{ v \in H^1(\Omega)^d;\ v=g_0 \ {\rm on} \ \Gamma_0,\ v\cdot n=0 \ {\rm on} \ \Gamma_2 \bigr\}, \  \, 
V := V(0), \  \,
Q := L^2(\Omega),
\end{align*}
respectively.
When $\Gamma=\Gamma_0$, we replace the definition of~$Q$ above with $Q:= L^2_0(\Omega) := \bigl\{ q\in L^2(\Omega);\; (q, 1) = 0 \bigr\}$
in a conventional way, cf.~\cite{19}.
We define bilinear forms $a_0$, $b$, and $c_0$, and trilinear forms $a_1$ and $c_1$ by
\begin{align*}
a_0(u, v) := 2\mu \bigl(  D(u), D(v) \bigr), \quad
b(v, q) := & -(\nabla\cdot v, q), \quad
c_0(u, v) := \mu \Bigl( \fz{\phi}{K(\phi)}u, v \Bigr), \\
a_1(u, w, v) := \rho \bigl(  (u\cdot\nabla) w, v \bigr), \quad
& c_1(\theta, u, v) := \rho \biggl( \fz{F(\phi) \phi \, \theta u}{\sqrt{K(\phi)}}, v \biggr).
\end{align*}
The weak formulation for problem~\eqref{prob} is to find $\{ (u, p) (t) \in V(g(t)) \times Q;\ t \in (0, T)\}$ such that, for $t\in (0,T)$,
\begin{subequations}\label{prob_weak}
\begin{align}
\rho \Bigl(\frac{\partial u}{\partial t}, v\Bigr) + a_0(u, v) + a_1\Bigl(u, \fz{u}{\phi}, v\Bigr) + b(v, p) & + b(u, q) + c_0(u, v) + c_1\bigl(\lvert u \rvert, u, v\bigr) \notag\\
& = \left( f(t),v\right), \quad
\forall(v, q) \in V \times Q, \\
u(0) & = u^{0} \quad \mbox{in}\ L^2(\Omega)^d.
\end{align}
\end{subequations}
%
\section{Stability estimates}
In this section, we present theoretical results, Theorem~\ref{thm} and Corollary~\ref{cor}, which provide a key inequality and stability estimates, respectively.
The stability estimates are easily derived from the key inequality.
\begin{theorem}\label{thm}
	Suppose that Hypotheses~\ref{hyp1} and~\ref{hyp2} hold true.
	Assume $g=0$.
	Suppose that $(u,p) \in (C^1([0,T];L^2(\Omega)^d) \cap L^2(0,T;V))\times L^2(0,T;L^2(\Omega))$ satisfies~\eqref{prob_weak}.
Then, it holds that
\begin{align}
\fz{d}{dt}\Bigl(\frac{\rho}{2}\lVert u(t) \rVert_{L^2(\Omega)}^{2} \Bigr) + \fz{\rho}{2} \int_{\Gamma_1} \fz{|u(t)|^2}{\phi} u(t)\cdot n\, ds + \mu \beta_0^2\| u(t) \|_{H^1(\Omega)}^2 + \mu\alpha \|u(t)\|_{L^2(\Omega)}^2 
\le \fz{1}{4\mu\beta_0^2} \|f(t)\|_{L^2(\Omega)}^2,
\label{ieq:thm}
\end{align}
where $\beta_0 > 0$ is a positive constant to be defined in~\eqref{ieq:Korn} below.
\end{theorem}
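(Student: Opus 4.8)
The plan is a direct energy estimate: test the weak formulation~\eqref{prob_weak} with $(v,q)=(u(t),p(t))$ and control each resulting term, the only nontrivial one being the inertial term $a_1(u,u/\phi,u)$, which, since $\phi$ is non-homogeneous, does not simply vanish. Because $g=0$ we have $V(g(t))=V$, so $u(t)\in V$ and this choice is admissible; taking $v=0$ with arbitrary $q$ in~\eqref{prob_weak} first gives $\nabla\cdot u(t)=0$, hence $b(u(t),p(t))=0$ and both pressure terms disappear. Using $u\in C^1([0,T];L^2(\Omega)^d)$ to write $\rho(\pz_t u,u)=\fz{d}{dt}\bigl(\fz{\rho}{2}\|u(t)\|_{L^2(\Omega)}^2\bigr)$, using $a_0(u,u)=2\mu\|D(u)\|_{L^2(\Omega)}^2$, using~\eqref{ieq:alpha} to get $c_0(u,u)\ge\mu\alpha\|u(t)\|_{L^2(\Omega)}^2$, and noting that $c_1(|u|,u,u)=\rho\int_\Omega F(\phi)\phi|u|^3/\sqrt{K(\phi)}\,dx\ge0$, I reduce everything to an identity whose only still-uncontrolled term is $a_1(u,u/\phi,u)$.

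The core of the proof is rewriting $a_1(u,u/\phi,u)=\rho\bigl((u\cdot\nabla)(u/\phi),u\bigr)$. Expanding $\pz_i(u_j/\phi)=\pz_i u_j/\phi-u_j\pz_i\phi/\phi^2$ and using $\nabla\cdot u=0$ gives $a_1(u,u/\phi,u)=\fz{\rho}{2}\int_\Omega\phi^{-1}\,u\cdot\nabla|u|^2\,dx-\rho\int_\Omega\phi^{-2}(u\cdot\nabla\phi)|u|^2\,dx$. Integrating the first integral by parts, using $\nabla\cdot(u/\phi)=-\phi^{-2}\,u\cdot\nabla\phi$ together with the boundary conditions $u=0$ on $\Gamma_0$ and $u\cdot n=0$ on $\Gamma_2$ to kill the contributions of $\Gamma_0$ and $\Gamma_2$, I obtain
\[
a_1\Bigl(u,\fz{u}{\phi},u\Bigr)=-\fz{\rho}{2}\int_\Omega\fz{(u\cdot\nabla\phi)|u|^2}{\phi^2}\,dx+\fz{\rho}{2}\int_{\Gamma_1}\fz{|u|^2}{\phi}\,u\cdot n\,ds,
\]
and the surface term is exactly the boundary integral appearing in~\eqref{ieq:thm}. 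Here $\phi\ge\phi_0>0$, and the interior regularity $u(t)\in H^1(\Omega)^d$ together with $u(t)_{|\Gamma}$ lying in $L^4$ on each smooth boundary piece make all integrals finite and the integration by parts legitimate.

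It remains to dispose of the sign-indefinite volume term $-\fz{\rho}{2}\int_\Omega\phi^{-2}(u\cdot\nabla\phi)|u|^2\,dx$, and this is the step I expect to be the real obstacle, everything else being routine. The idea is that it is dominated by the Forchheimer dissipation $c_1$: from~\eqref{def:F_and_K} one computes $F(\phi)\phi/\sqrt{K(\phi)}=b(1-\phi)/(d_p\phi^2)$, so Hypothesis~\ref{hyp2}$(ii)$ yields the pointwise bound $\bigl|\fz{\rho}{2}\phi^{-2}(u\cdot\nabla\phi)|u|^2\bigr|\le\fz{\rho b}{d_p}\phi^{-2}(1-\phi)|u|^3=\rho F(\phi)\phi|u|^3/\sqrt{K(\phi)}$; integrating and comparing with the definition of $c_1$ gives $c_1(|u|,u,u)-\fz{\rho}{2}\int_\Omega\phi^{-2}(u\cdot\nabla\phi)|u|^2\,dx\ge0$, so this nonnegative quantity may simply be discarded. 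This is precisely the role of Hypothesis~\ref{hyp2}$(ii)$: it is calibrated so that the destabilizing porosity-gradient part of the inertia is absorbed by the Forchheimer term.

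Putting these steps together I arrive at $\fz{d}{dt}\bigl(\fz{\rho}{2}\|u\|_{L^2(\Omega)}^2\bigr)+\fz{\rho}{2}\int_{\Gamma_1}\phi^{-1}|u|^2\,u\cdot n\,ds+2\mu\|D(u)\|_{L^2(\Omega)}^2+\mu\alpha\|u\|_{L^2(\Omega)}^2\le(f,u)$. Finally I would combine Korn's inequality~\eqref{ieq:Korn} (valid on $V$ since ${\rm meas}(\Gamma_0)>0$) with Young's inequality: bound $2\mu\|D(u)\|_{L^2(\Omega)}^2$ below by $\mu\beta_0^2\|u\|_{H^1(\Omega)}^2$ plus a leftover controlling $\|u\|_{H^1(\Omega)}^2$, estimate $(f,u)\le\|f\|_{L^2(\Omega)}\|u\|_{L^2(\Omega)}\le\|f\|_{L^2(\Omega)}\|u\|_{H^1(\Omega)}$, and absorb it into that leftover at the price of $\fz{1}{4\mu\beta_0^2}\|f\|_{L^2(\Omega)}^2$ on the right-hand side. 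This yields exactly~\eqref{ieq:thm}.
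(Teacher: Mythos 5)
Your proposal is correct and takes essentially the same route as the paper's own proof: your integration-by-parts identity for $a_1(u,u/\phi,u)$ is exactly Lemma~\ref{lem:a1}, your absorption of the porosity-gradient term into the Forchheimer dissipation via Hypothesis~\ref{hyp2}-$(ii)$ is the paper's observation $G_\phi\le 0$ in~\eqref{ieq:G}, and the final Korn--Young step is identical. The only cosmetic difference is that you recover $\nabla\cdot u=0$ by first testing with $v=0$ and then take $q=p$, whereas the paper cancels the pressure terms directly by substituting $(v,q)=(u,-p)$.
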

\begin{corollary}[Stability estimates]\label{cor}
In addition to the same assumptions in Theorem~\ref{thm}, suppose that ${u \cdot n \ge 0}$ on $\Gamma_1\times [0, T]$.
Then, we have the following.
\begin{itemize}
\item[$(i)$]
It holds that
\begin{align}
\sqrt{\rho}\lVert u\rVert_{L^{\infty}(0,T;L^2(\Omega))} & + \sqrt{\mu} \beta_0 \lVert u\rVert_{L^2(0,T;H^1(\Omega))}  \notag\\
& \le 2 \Bigl( \sqrt{\rho} \lVert u^0 \rVert_{L^2(\Omega)} + \fz{1}{\sqrt{\mu}\beta_0} \lVert f \rVert_{L^{2}(0,T;L^2(\Omega))} \Bigr).
\label{ieq:cor_1}
\end{align}
\item[$(ii)$]
It holds that, for any $t \in [0, T]$,
\begin{align}
\lVert u(t) \rVert_{L^2(\Omega)}
\le \exp\Bigl( -\fz{\mu\alpha}{\rho} \, t \Bigr) \lVert u^0 \rVert_{L^2(\Omega)} + \fz{1}{\sqrt{2\rho\mu}\beta_0} \lVert f \rVert_{L^{2}(0,t;L^2(\Omega))}.
\label{ieq:cor_2}
\end{align}
\end{itemize}
\end{corollary}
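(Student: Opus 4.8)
The plan is to obtain both estimates directly from the key inequality~\eqref{ieq:thm} of Theorem~\ref{thm}, whose hypotheses are in force. The new assumption, $u\cdot n\ge 0$ on $\Gamma_1\times[0,T]$, together with $\phi>0$ from Hypothesis~\ref{hyp2}, renders the boundary term $\fz{\rho}{2}\int_{\Gamma_1}\fz{|u(t)|^2}{\phi}\,u(t)\cdot n\,ds$ nonnegative, so it can be discarded from the left-hand side of~\eqref{ieq:thm}. This leaves the reduced differential inequality
\[
\fz{d}{dt}\Bigl(\fz{\rho}{2}\|u(t)\|_{L^2(\Omega)}^2\Bigr) + \mu\beta_0^2\|u(t)\|_{H^1(\Omega)}^2 + \mu\alpha\|u(t)\|_{L^2(\Omega)}^2 \le \fz{1}{4\mu\beta_0^2}\|f(t)\|_{L^2(\Omega)}^2 \qquad (t\in(0,T)),
\]
and the two parts of the corollary follow by retaining different nonnegative terms on the left.

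For part~$(i)$ I would drop $\mu\alpha\|u(t)\|_{L^2(\Omega)}^2\ge0$ (recall $\alpha\ge0$), integrate over $(0,t)$ using $u(0)=u^0$, and arrive at
\[
\fz{\rho}{2}\|u(t)\|_{L^2(\Omega)}^2 + \mu\beta_0^2\int_0^t\|u(s)\|_{H^1(\Omega)}^2\,ds \le M^2, \qquad M^2:=\fz{\rho}{2}\|u^0\|_{L^2(\Omega)}^2 + \fz{1}{4\mu\beta_0^2}\|f\|_{L^2(0,T;L^2(\Omega))}^2,
\]
for every $t\in[0,T]$. Taking the supremum over $t$ gives $\sqrt{\rho/2}\,\|u\|_{L^\infty(0,T;L^2(\Omega))}\le M$, and taking $t=T$ gives $\sqrt{\mu}\,\beta_0\|u\|_{L^2(0,T;H^1(\Omega))}\le M$; adding these two, and using $\sqrt{a+b}\le\sqrt a+\sqrt b$ to split $M\le\sqrt{\rho/2}\,\|u^0\|_{L^2(\Omega)}+\fz{1}{2\sqrt{\mu}\,\beta_0}\|f\|_{L^2(0,T;L^2(\Omega))}$, yields~\eqref{ieq:cor_1}; the only remaining point is to verify that the resulting numerical prefactors, $1+1/\sqrt2$ and $(1+\sqrt2)/2$, are both $\le2$.

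For part~$(ii)$ I would instead drop $\mu\beta_0^2\|u(t)\|_{H^1(\Omega)}^2\ge0$. Writing $y(t):=\|u(t)\|_{L^2(\Omega)}^2$, the reduced inequality becomes the linear differential inequality $y'(t)+\fz{2\mu\alpha}{\rho}\,y(t)\le\fz{1}{2\rho\mu\beta_0^2}\|f(t)\|_{L^2(\Omega)}^2$. Multiplying by the integrating factor $e^{2\mu\alpha t/\rho}$, integrating over $(0,t)$, and bounding the Duhamel kernel crudely by $e^{-2\mu\alpha(t-s)/\rho}\le1$ for $0\le s\le t$, one gets $y(t)\le e^{-2\mu\alpha t/\rho}\|u^0\|_{L^2(\Omega)}^2+\fz{1}{2\rho\mu\beta_0^2}\|f\|_{L^2(0,t;L^2(\Omega))}^2$; taking square roots and again applying $\sqrt{a+b}\le\sqrt a+\sqrt b$ (with $\sqrt{e^{-2\mu\alpha t/\rho}}=e^{-\mu\alpha t/\rho}$) gives exactly~\eqref{ieq:cor_2}.

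I do not expect a genuine obstacle: the corollary is a routine consequence of the key inequality. The two spots that deserve a little attention are the sign argument eliminating the $\Gamma_1$ term --- which is precisely where the hypothesis $u\cdot n\ge0$ enters --- and, in part~$(ii)$, the crude estimate $e^{-2\mu\alpha(t-s)/\rho}\le1$ on the kernel, which is what keeps the forcing contribution an unweighted $L^2(0,t;L^2(\Omega))$ norm and the bound uniform when $\alpha=0$, at the cost of discarding the exponential decay in that term; bookkeeping of the constants in part~$(i)$ to land the clean factor $2$ is the other, purely arithmetic, point.
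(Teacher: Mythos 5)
Your proposal is correct and follows essentially the same route as the paper: discard the $\Gamma_1$ boundary term (nonnegative by the new hypothesis $u\cdot n\ge 0$ and $\phi>0$), then drop the $\mu\alpha\|u\|_{L^2(\Omega)}^2$ term and integrate for part~$(i)$, and drop the $\mu\beta_0^2\|u\|_{H^1(\Omega)}^2$ term and apply Gronwall for part~$(ii)$. Your constant bookkeeping in $(i)$ (prefactors $1+1/\sqrt{2}$ and $(1+\sqrt{2})/2$, both $\le 2$) and your explicit integrating-factor version of Gronwall in $(ii)$ both check out and match the stated bounds.
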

\par
The proofs of Theorem~\ref{thm} and Corollary~\ref{cor} are given after preparing two lemmas.
\begin{lemma}[Korn's inequality, \cite{20,21}]\label{lem:Korn}
Let $\Omega$ be a bounded domain with a {\rm Lipschitz}-continuous boundary~$\pz\Omega$, and let $\Gamma_0$ be a part of $\pz\Omega$ and piecewise {\rm Lipschitz}-continuous. Assume ${\rm meas}(\Gamma_0) > 0$.
Then, there exists a positive constant~$\beta_0$ such that
\begin{align}
\beta_0 \|u\|_{H^1(\Omega)} \le \|D(u)\|_{L^2(\Omega)}, \qquad \forall u \in \{ v \in H^1(\Omega)^d;\ v = 0\ \mbox{on}\ \Gamma_0\}.
\label{ieq:Korn}
\end{align}
\end{lemma}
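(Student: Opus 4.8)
The plan is to obtain \eqref{ieq:Korn} from the classical \emph{second Korn inequality} on $\Omega$, i.e.\ the existence of a constant $C_K>0$ with
\[
\|v\|_{H^1(\Omega)}^2 \le C_K\Bigl(\|v\|_{L^2(\Omega)}^2 + \|D(v)\|_{L^2(\Omega)}^2\Bigr)\qquad\text{for all }v\in H^1(\Omega)^d,
\]
which holds on every bounded Lipschitz domain and is essentially the content of the cited references; I would take it as known. (Its proof rests on the distributional identity $\partial_i\partial_j v_k = \partial_i D_{jk}(v)+\partial_j D_{ik}(v)-\partial_k D_{ij}(v)$ combined with a Ne\v{c}as-type lemma bounding an $L^2$ function by the $H^{-1}$ norms of the function and of its gradient.)

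Then I would argue by contradiction and compactness. Put $X:=\{v\in H^1(\Omega)^d : v=0 \text{ on }\Gamma_0\}$, which is a closed, hence weakly closed, subspace of $H^1(\Omega)^d$ because the trace onto $\Gamma_0$ is continuous. If \eqref{ieq:Korn} failed for every $\beta_0>0$, there would exist $v_n\in X$ with $\|v_n\|_{H^1(\Omega)}=1$ and $\|D(v_n)\|_{L^2(\Omega)}\to 0$. Since $\Omega$ is bounded and Lipschitz, reflexivity of $H^1(\Omega)^d$ and Rellich's theorem give a subsequence (not relabelled) with $v_n\rightharpoonup v$ in $H^1(\Omega)^d$ and $v_n\to v$ in $L^2(\Omega)^d$, and $v\in X$. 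Applying the second Korn inequality to $v_n-v_m$ shows that $\{v_n\}$ is Cauchy in $H^1(\Omega)^d$, so in fact $v_n\to v$ strongly in $H^1(\Omega)^d$; therefore $\|v\|_{H^1(\Omega)}=1$ while $\|D(v)\|_{L^2(\Omega)}=\lim_n\|D(v_n)\|_{L^2(\Omega)}=0$.

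It remains to show $v\equiv 0$, which contradicts $\|v\|_{H^1(\Omega)}=1$. Since $\Omega$ is a (connected) domain, $D(v)=0$ forces $v$ to be an infinitesimal rigid displacement $v(x)=a+Bx$ with $a\in\R^d$ and $B\in\R^{d\times d}$ skew-symmetric (the identity above makes $v$ affine, and $D(v)=0$ is equivalent to skew-symmetry of its linear part). Because ${\rm meas}(\Gamma_0)>0$ and $\Gamma_0$ is piecewise Lipschitz, $\Gamma_0$ contains $d$ affinely independent points; the trace condition $v=0$ on $\Gamma_0$ then gives $a+Bx=0$ at those points, so $B$ annihilates a $(d-1)$-dimensional subspace and thus has rank $\le 1$. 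A skew-symmetric matrix has even rank, whence $B=0$, and then $a=0$; that is, $v\equiv 0$. This establishes the existence of $\beta_0>0$.

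The genuinely hard ingredient is the classical second Korn inequality invoked in the first step; everything after it is soft functional analysis. The only other delicate point is that $\Gamma_0$ must be ``large enough'' to annihilate the finite-dimensional space of rigid displacements, and the hypotheses ${\rm meas}(\Gamma_0)>0$ together with the piecewise Lipschitz regularity of $\Gamma_0$ are exactly what make this work.
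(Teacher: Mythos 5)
Your argument is correct. The paper does not prove this lemma at all --- it is quoted as a classical result with citations to Ne\v{c}as and Brenner--Scott --- and what you have written is precisely the standard proof found in those references: the second Korn inequality plus a Rellich compactness/contradiction argument, reducing to the elimination of infinitesimal rigid displacements via the condition ${\rm meas}(\Gamma_0)>0$ (positive surface measure forces the affine zero set of $a+Bx$ to have dimension at least $d-1$, hence $\mathrm{rank}\,B\le 1$, and skew-symmetry then gives $B=0$). All the delicate points (weak closedness of the subspace, upgrading weak to strong convergence, the even-rank argument) are handled correctly.
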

\begin{lemma}\label{lem:a1}
	Suppose Hypothesis~\ref{hyp2}-$(i)$ holds true.
	Assume $u \in H^1(\Omega)^d$ and $\nabla\cdot u = 0$ in $\Omega$.
	Then, it holds that 
	\begin{equation}\label{identity_a1}
	\Bigl( (u\cdot\nabla)\Bigl(\frac{u}{\phi}\Bigr), u \Bigr) = \fz{1}{2} \int_\Gamma \fz{|u|^2}{\phi} u\cdot n\, ds + \fz{1}{2} \Bigl(  |u|^{2}, (u\cdot\nabla)\frac{1}{\phi} \Bigr).
	\end{equation}
\end{lemma}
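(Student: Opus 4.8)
The plan is to prove the identity~\eqref{identity_a1} by a direct integration-by-parts computation, expanding the quotient $u/\phi$ via the product rule and then exploiting the divergence-free condition $\nabla\cdot u = 0$. First I would write the left-hand side component-wise:
\[
\Bigl( (u\cdot\nabla)\Bigl(\frac{u}{\phi}\Bigr), u \Bigr)
= \sum_{i,j} \int_\Omega u_j \, \pz_j\Bigl(\frac{u_i}{\phi}\Bigr) u_i \, dx
= \sum_{i,j} \int_\Omega u_j \Bigl( \frac{\pz_j u_i}{\phi} - \frac{u_i \pz_j \phi}{\phi^2} \Bigr) u_i \, dx.
\]
The second group of terms is already in good shape: it equals $-\tfrac{1}{2}\int_\Omega |u|^2 \, (u\cdot\nabla)\tfrac{1}{\phi}\cdot(-1)$... more precisely, since $\pz_j(1/\phi) = -\pz_j\phi/\phi^2$, the sum $\sum_{i,j} \int_\Omega \frac{u_j u_i^2 \pz_j\phi}{\phi^2}\,dx$ equals $-\bigl(|u|^2, (u\cdot\nabla)\tfrac{1}{\phi}\bigr)$, so that this contribution already accounts for the last term on the right-hand side with the correct sign if I am careful with signs. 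Thus the work reduces to showing that the first group, $\sum_{i,j}\int_\Omega \frac{u_j u_i \pz_j u_i}{\phi}\,dx$, equals $\tfrac12\int_\Gamma \tfrac{|u|^2}{\phi} u\cdot n\,ds + \tfrac12\bigl(|u|^2,(u\cdot\nabla)\tfrac1\phi\bigr) + \tfrac12\bigl(|u|^2,(u\cdot\nabla)\tfrac1\phi\bigr)$ — I will need to recombine terms at the end, so let me instead organize the argument around the key observation below.

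The key observation is that $u_i \pz_j u_i = \tfrac12 \pz_j(|u|^2)$, so the troublesome term becomes
\[
\sum_j \int_\Omega \frac{u_j}{\phi} \cdot \frac{1}{2}\pz_j(|u|^2)\, dx = \frac{1}{2}\int_\Omega \frac{1}{\phi}\, (u\cdot\nabla)(|u|^2)\, dx.
\]
Now I would integrate by parts, moving the derivative off $|u|^2$. Using the identity $\nabla\cdot\bigl(\tfrac{|u|^2}{\phi}u\bigr) = \tfrac{1}{\phi}(u\cdot\nabla)(|u|^2) + |u|^2\,(u\cdot\nabla)\tfrac1\phi + \tfrac{|u|^2}{\phi}\nabla\cdot u$ and invoking $\nabla\cdot u = 0$, the last term drops and the divergence theorem gives
\[
\frac{1}{2}\int_\Omega \frac{1}{\phi}(u\cdot\nabla)(|u|^2)\,dx = \frac{1}{2}\int_\Gamma \frac{|u|^2}{\phi} u\cdot n\, ds - \frac{1}{2}\Bigl(|u|^2, (u\cdot\nabla)\frac1\phi\Bigr).
\]
Adding this to the second group $-\bigl(|u|^2,(u\cdot\nabla)\tfrac1\phi\bigr)$ would give $\tfrac12\int_\Gamma\tfrac{|u|^2}{\phi}u\cdot n\,ds - \tfrac32\bigl(|u|^2,(u\cdot\nabla)\tfrac1\phi\bigr)$, which is not~\eqref{identity_a1}; so I must have a sign error in the second group. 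Re-examining: $-\tfrac{u_i\pz_j\phi}{\phi^2} = u_i\,\pz_j(\tfrac1\phi)$, hence the second group is $\sum_{i,j}\int_\Omega u_j u_i^2 \,\pz_j(\tfrac1\phi)\,dx = +\bigl(|u|^2,(u\cdot\nabla)\tfrac1\phi\bigr)$, with a plus sign. Then adding gives $\tfrac12\int_\Gamma\tfrac{|u|^2}{\phi}u\cdot n\,ds + \tfrac12\bigl(|u|^2,(u\cdot\nabla)\tfrac1\phi\bigr)$, which is exactly~\eqref{identity_a1}.

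The main obstacle, such as it is, is purely bookkeeping: keeping the two signs straight when converting between $\pz_j\phi/\phi^2$ and $\pz_j(1/\phi)$, and correctly tracking the factor of $\tfrac12$ through the identity $u_i\pz_j u_i = \tfrac12\pz_j|u|^2$. Hypothesis~\ref{hyp2}-$(i)$ ensures $\phi \in W^{1,\infty}(\Omega)$ with $\phi \ge \phi_0 > 0$, so $1/\phi$ and $\nabla(1/\phi) = -\nabla\phi/\phi^2$ are bounded, making all the integrals finite for $u \in H^1(\Omega)^d$ (indeed $|u|^2 u \in L^1$ requires $u\in L^3$, which holds by Sobolev embedding in dimensions $d=2,3$) and justifying the integration by parts by a standard density argument approximating $u$ by smooth functions. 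No further estimates are needed; the statement is an exact algebraic identity once the divergence-free condition is used to kill the $\tfrac{|u|^2}{\phi}\nabla\cdot u$ term.
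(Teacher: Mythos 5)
Your proof is correct, and it is essentially the same argument as the paper's: both rest on the product rule to peel off $1/\phi$, integration by parts together with $\nabla\cdot u=0$ to produce the boundary term, and the symmetrization $u_i\pz_j u_i=\tfrac12\pz_j|u|^2$, which is exactly what the paper achieves by writing $I$ in two ways and averaging. The sign detour in your middle paragraph is resolved correctly, so the final identity matches~\eqref{identity_a1}.
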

\begin{proof}
	Let $I \equiv ( (u\cdot\nabla)(u/\phi), u)$.
	From the integration by parts, and the assumption, $\nabla \cdot u = 0$, the following identity holds:
	\begin{equation}\label{identity_I_1}
	I = \int_\Gamma \frac{|u|^2}{\phi} u \cdot n \, ds - \Bigl( \nabla \cdot (u \otimes u), \frac{u}{\phi} \Bigr) = \int_\Gamma \frac{|u|^2}{\phi} u \cdot n \, ds - \Bigl( (u\cdot\nabla)u, \frac{u}{\phi} \Bigr).
	\end{equation}
	On the other hand, from the product rule, we get another identity:
	\begin{align}
	I & = \Bigl( [(u\cdot\nabla)u] \frac{1}{\phi} + \Bigl[ (u\cdot\nabla)\Bigl( \frac{1}{\phi} \Bigr) \Bigr] u, u \Bigr) 
	 = \Bigl( (u\cdot\nabla)u, \frac{u}{\phi} \Bigr) + \Bigl( |u|^2, (u\cdot\nabla) \frac{1}{\phi} \Bigr).
	 \label{identity_I_2}
	\end{align}
	Adding the two equations~\eqref{identity_I_1} and~\eqref{identity_I_2} and dividing it by~$2$, we obtain~\eqref{identity_a1}.
\end{proof}
\begin{proof}[Proof of Theorem~\ref{thm}]
Substituting $(u, -p) \in V \times Q$ into $(v,q)$ in~\eqref{prob_weak}, we have
\begin{equation}\label{proof:eq1}
\rho\Bigl(\prz{u}{t}, u \Bigr) +a_0(u, u) +a_1\Bigl(u, \fz{u}{\phi}, u\Bigr) + c_0(u, u)+c_1(\lvert u\lvert, u, u) \le (f, u).
\end{equation}
We evaluate each term in~\eqref{proof:eq1} as follows:
\begin{subequations}\label{proof:estimates}
\begin{align}
\rho\Bigl(\prz{u}{t}, u \Bigr) & = \fz{d}{dt}\Bigl( \fz{\rho}{2}\lVert u \rVert_{L^2(\Omega)}^{2} \Bigr), \\
a_0(u, u) & = 2\mu \| D(u) \|_{L^2(\Omega)}^2 \ge 2\mu\beta_0^2 \| u \|_{H^1(\Omega)}^2 & \mbox{(by Lem.~\ref{lem:Korn})}, \\
a_1\Bigl(u, \fz{u}{\phi}, u\Bigr) & = \fz{\rho}{2} \int_{\Gamma_1} \fz{|u|^2}{\phi} u\cdot n\, ds + \fz{\rho}{2} \Bigl(  |u|^{2}, (u\cdot\nabla)\frac{1}{\phi} \Bigr) & \mbox{(by Lem.~\ref{lem:a1})} \, \notag\\
& \ge \fz{\rho}{2} \int_{\Gamma_1} \fz{|u|^2}{\phi} u\cdot n\, ds - \Bigl(  |u|^{2}, \fz{\rho |u|}{2} \Bigl|\nabla\frac{1}{\phi} \Bigr| \Bigr), \\
c_0(u, u) & = \mu\Bigl( \fz{\phi}{K(\phi)}, |u|^{2} \Bigr) \ge \mu\alpha \|u\|_{L^2(\Omega)}^2 & \mbox{(by~\eqref{ieq:alpha})}, \\
c_1(\lvert u\lvert, u, u) & = \biggl( |u|^2, \rho|u| \fz{F(\phi)\phi}{\sqrt{K(\phi)}} \biggr), \\
(f, u) & \le \mu\beta_0^2 \|u\|_{L^2(\Omega)}^2 + \fz{1}{4\mu\beta_0^2} \|f\|_{L^2(\Omega)}^2 \notag\\
& \le \mu\beta_0^2 \|u\|_{H^1(\Omega)}^2 + \fz{1}{4\mu\beta_0^2} \|f\|_{L^2(\Omega)}^2.
\end{align}
\end{subequations}
Here, we note the fact that Hypothesis~\ref{hyp2} yields
\begin{align}
G_\phi & := \fz{1}{2}\biggl| \nabla\fz{1}{\phi}\biggr| - \fz{F(\phi)\phi}{\sqrt{K(\phi)}} 
= \fz{1}{2\phi^2} \Bigl[ | \nabla\phi | - \fz{2b}{d_p} ( 1-\phi ) \Bigr]
\le 0
\quad \mbox{a.e. in}\ \Omega.
\label{ieq:G}
\end{align}
Combining~\eqref{proof:estimates} with~\eqref{proof:eq1} and using~\eqref{ieq:G}, we obtain
\begin{align*}
\fz{d}{dt}\Bigl(\frac{\rho}{2}\lVert u(t) \rVert_{L^2(\Omega)}^{2} \Bigr) + \fz{\rho}{2} \int_{\Gamma_1} \fz{|u(t)|^2}{\phi} u(t) \cdot n\, ds + \mu \beta_0^2\| u(t) \|_{H^1(\Omega)}^2 + \mu\alpha \|u(t)\|_{L^2(\Omega)}^2 \\
\le \fz{1}{4\mu\beta_0^2} \|f(t)\|_{L^2(\Omega)}^2 + \bigl( |u(t)|^{2}, \rho |u(t)| G_\phi \bigr)
\le \fz{1}{4\mu\beta_0^2} \|f(t)\|_{L^2(\Omega)}^2.
\end{align*}
Thus, we obtain~\eqref{ieq:thm}.
\end{proof}
\begin{proof}[Proof of Corollary~\ref{cor}]
Firstly, we prove~$(i)$.
Dropping the non-negative second and forth terms in~\eqref{ieq:thm}, we have
\[
\fz{d}{dt}\Bigl(\frac{\rho}{2} \lVert u (t) \rVert_{L^2(\Omega)}^{2} \Bigr) + \mu \beta_0^2\| u(t) \|_{H^1(\Omega)}^2 \le \fz{1}{4\mu\beta_0^2} \|f(t)\|_{L^2(\Omega)}^2,
\]
which implies~\eqref{ieq:cor_1}.
Here, we have used the fact that, for non-negative functions~$\eta\in C^1([0,T]; \R)$ and $\phi, \psi \in L^1([0,T]; \R)$, the inequality~$\eta^\prime(t) + \phi(t) \le \psi(t)$ $(t\in [0,T])$ yields $\|\eta\|_{L^\infty(0,T)} + \|\phi\|_{L^1(0,T)} \le 2 [\eta(0) + \|\psi\|_{L^1(0,T)}]$, and an inequality~$(a+b)/\sqrt{2} \le \sqrt{a^2+b^2}$~$(a, b \in\R)$.
\par
Secondly, we prove~$(ii)$.
Dropping the non-negative second and third terms in~\eqref{ieq:thm}, we get
\[
\fz{d}{dt}\Bigl(\frac{\rho}{2}\lVert u(t) \rVert_{L^2(\Omega)}^{2} \Bigr) + \mu\alpha \|u(t)\|_{L^2(\Omega)}^2 \le \fz{1}{4\mu\beta_0^2} \|f(t)\|_{L^2(\Omega)}^2,
\]
which implies~\eqref{ieq:cor_2} from Gronwall's inequality.
\end{proof}
%
\section{A Lagrange--Galerkin scheme}

In this section, we present a Lagrange--Galerkin scheme of second-order in time for problem~\eqref{prob}.
\par
For the Darcy velocity $u$ and the porosity $\phi$ in problem~\eqref{prob}, we introduce the macroscopic average velocity~$w: \overline{\Omega} \times [0,T] \to \R^d$ and the material derivative~$D/Dt$ with respect to~$w$ defined by
\begin{align*}
w & := \fz{u}{\phi}, & \fz{D}{Dt} & := \prz{}{t} + w\cdot\nabla.
\end{align*}
Then, we can rewrite $\pz u/\pz t + (u\cdot\nabla) (u/\phi)$ by
\begin{align}
\prz{u}{t}+(u\cdot\nabla)\fz{u}{\phi} = \phi \Bigl[ \prz{w}{t}+(w\cdot\nabla) w \Bigr] = \phi \fz{Dw}{Dt}.
\label{eq:mat_der}
\end{align}
The equation~\eqref{eq:mat_der} is a fundamental relation to the development of our new numerical scheme to be presented.
\par
Let $\tau$ be a time increment, $N_{T} := \lfloor T/\tau \rfloor$ the total number of time steps, and $t^k := k \tau$ for $k \in \{0, 1, \ldots, N_T\}$.
For a function~$\psi$ defined in~$\overline{\Omega}\times [0,T]$ or $\Gamma_0\times [0,T]$, we denote $\psi(\cdot, t^k)$ simply by $\psi^k$.
Let $X: [0,T] \to \R^d$ be a solution of the following ordinary differential equation,
\begin{equation}\label{ode}
X^\prime(t) = w(X(t),t), \quad t \in [0,T],
\end{equation}
subjected to an initial condition $X(t^k) = x$.
Physically, $X(t)$ represents the position of a fluid particle with respect to the macroscopic average velocity~$w$ at time~$t$.
For a given velocity $v: \Omega \to \R^d$, let $ X_1(v, \tau): \Omega\to\R^d$ be the mapping defined by 
\begin{equation}
X_1(v, \tau)(x):= x-v(x)\tau,
\label{def:X1}
\end{equation}
which is an upwind point of $x$ with respect to the velocity~$v$ and a time increment~$\tau$.
Now, we derive the second-order approximation of~$\pz u/\pz t + (u\cdot\nabla) (u/\phi)$ at $(x,t^k)$ by the Adams--Bashforth method as follows:
\begin{align}
& \Bigl[ \prz{u}{t}+(u\cdot\nabla)\fz{u}{\phi} \Bigr] (x,t^k) = \phi(x)\frac{Dw}{Dt}(x, t^k) = \phi(x) \fz{d}{dt}\left( w(X(t),t)\right)_{|t=t^k} \notag\\
& = \fz{\phi(x)}{2\tau} \Bigl[ 3w^k-4w^{k-1} \circ X_1\bigl(w^k, \tau\bigr) + w^{k-2} \circ X_1\bigl(w^k, 2\tau\bigr) \Bigr] (x) +O(\tau^2) \label{eq:derivation_md_AB}\\
& = \fz{\phi(x)}{2\tau} \Bigl[ 3w^k-4w^{k-1} \circ X_1\bigl(w^{(k-1)\ast}, \tau\bigr) + w^{k-2} \circ X_1\bigl(w^{(k-1)\ast}, 2\tau\bigr) \Bigr] (x) +O(\tau^2) \notag\\
&=\fz{1}{2\tau} \Bigl[ 3u^k - \phi \bigl[ 4 w^{k-1}\circ X_1(w^{(k-1)\ast},\tau) - w^{k-2}\circ X_1(w^{(k-1)\ast},2\tau)\bigr] \Bigr](x) +O(\tau^2),\notag
\end{align}
\noindent
where the symbol ``$\circ$'' denotes the composition of functions,
\[
[v\circ X_1(v,\tau)] (x) = v(X_1(v,\tau)(x)),
\]
and $w^{(k-1)\ast}$ is a second-order approximation of~$w^k$ defined by
\[
w^{(k-1)\ast} := 2 w^{k-1} - w^{k-2}.
\]
The idea of~\eqref{eq:derivation_md_AB} has been proposed and employed in~\cite{EwiRus-1981,BouMadMetRaz-1997,22,23}.
\par
Let $\mathcal{T}_{h}:= \{e\}$ be a triangulation of $\overline{\Omega}\left(=\cup_{e\in\mathcal{T}_{h}} \right) $, $h_e$ the diameter of $e \in\mathcal{T}_{h}$, and $h := \max_{e \in \mathcal{T}_h} h_e$ the maximum element size. 
We define the function spaces $X_h, M_h,V_h$ and $Q_h$ by 
\begin{align*}
X_h & := \bigl\{ v_{h}\in C(\overline\Omega)^d;\; v_{h|e} \in P_{2}(e)^d, \ \forall e\in\mathcal{T}_{h} \bigr\}, \\
M_h & := \bigl\{ q_{h}\in C(\overline\Omega);\; q_{h|e} \in P_{1}(e), \ \forall e\in\mathcal{T}_{h}\bigr\},
\end{align*}
$V_{h}:= X_{h}\cap V$, and $ Q_{h}:= M_{h}\cap Q=M_{h}$, respectively, where $P_{k}(e)$ is the (scalar-valued) polynomial space of degree $k\in\N$ on $e$.
\par
Let $u_h^0 \in X_h$ and $\{g_h^k\}_{k=1}^{N_T} \subset X_h$, approximations of~$u^0$ and~$g$, be given.
Our new Lagrange--Galerkin scheme of second-order in time for solving problem~\eqref{prob} is to find $\left\lbrace (u_{h}^{k},p_{h}^{k})\right\rbrace^{N_{T}}_{k=1} \subset V_{h}(g_h^k) \times Q_{h} $ such that, for all $(v_{h},q_{h}) \in V_{h} \times Q_{h}$,
\begin{subequations}\label{scheme}
\begin{align}
\intertext{(initial step)}
\biggl( \fz{u_h^1-\phi [w_h^0\circ X_1(w_h^0, \tau)]}{\tau}, v_h \biggr) + a_0(u_h^1,v_h) & + b(v_h, p_h^1) + b(u_h^1, q_h) \notag\\
+ c_0 (u_h^1, v_h) & + c_1(|u_h^0|, u_h^1, v_h) = (f^1, v_h),
\label{scheme_eq1}
\intertext{(general step)}
\biggl( \fz{1}{2\tau} \Bigl[ 3u_h^k - \phi \bigl[ 4 w_h^{k-1}\circ X_1(w_h^{(k-1)\ast},\tau) & - w_h^{k-2}\circ X_1(w_h^{(k-1)\ast},2\tau)\bigr] \Bigr], v_h \biggr) \notag\\
+ a_0(u_h^k, v_h)+b(v_h, p_h^k)+b(u_h^k,q_h) & + c_0(u_h^k, v_h) + c_1(|u_h^{(k-1)\ast}|, u_h^k, v_h,) \notag\\
& = (f^k, v_h),
\quad k=2,\ldots, N_{T}, 
\label{scheme_eq2}
\end{align}
\end{subequations}
where $w_h^k$ and $w_h^{(k-1)\ast}$ are defined by
\begin{align*}
w_h^k := \fz{u_h^k}{\phi}, \qquad
w_h^{(k-1)\ast} := 2w_h^{k-1} - w_h^{k-2}.
\end{align*}
We compute $(u_h^1, p_h^1)$ by~\eqref{scheme_eq1} and $\{(u_h^k, p_h^k)\}_{k=2}^{N_T}$ by~\eqref{scheme_eq2}.
This idea on the initial step treatment has been proposed for the Navier--Stokes equations, cf.~\cite{23}, where the second-order convergence in time in $L^2(\Omega)$-norm has been proved.
Here, we apply it to problem~\eqref{prob}.
%
\section{Numerical results}
In this section, we confirm the experimental order of convergence of scheme~\eqref{scheme} and perform some numerical simulation for fluid flow in non-homogeneous porous media.
\subsection{Order of Convergence}\label{subsec:eoc}
In this subsection, a two-dimensional test problem is computed by scheme~\eqref{scheme} to check the order of convergence of the scheme. In problem~\eqref{prob} we set $\Omega=(0,\pi)^{2}$~[cm], $T=1$~[s], $\mu=8.89\times10^{-3}$~[dyn$\cdot$s/cm$^2$],  $d_{p}=5\times 10^{-2}$~[cm], $\rho=9.951\times 10^{-1}$~[gr/cm$^3$], and $\phi=\left[ 2+\sin(\frac{2 y}{5})\right] /3$. The functions $g$ and $u^{0}$ are given so that the manufactured solution is
\[
u(x,t) = \Bigl( -\frac{\partial \psi}{\partial y},\frac{\partial \psi}{\partial x} \Bigr)(x,t),
\quad
p=\sin(x)\sin(y)e^{-2t},
\quad
\psi = \sin^3(x)\sin^3(y)e^{-2t}.
\]
\par
The problem is solved by scheme~\eqref{scheme} with $h=\pi/N$ for $N=4,8,16,32,128$, and $\tau=h$. 
For the computation we employed FreeFem++~\cite{26} with P2/P1-element. For the solution $(u_{h},p_{h})$ of scheme~\eqref{scheme} we define errors $Er1$ and $Er2$ by
\[
Er1 := \max_{n=0, \dots ,N_{T}}\parallel u_{h}^{n}-u^{n}\parallel_{H^{1}(\Omega)},
\quad
Er2 := \max_{n=0, \dots ,N_{T}}\parallel p_{h}^{n}-p^{n}\parallel_{L^{2}(\Omega)}.
\]
Figure~\ref{fig:EOC} shows the graphs of $Er1$ and $Er2$ versus $h~(=\tau)$ in logarithmic scale. The values of $Er1$, $Er2$ and slopes are represented in Table~\ref{tab:PPer}.
We can see that both $Er1$ and $Er2$ are almost of second order in $h~(=\tau)$. 
\begin{figure}[htbp]
	\begin{center}
		\includegraphics[width=3.0in]{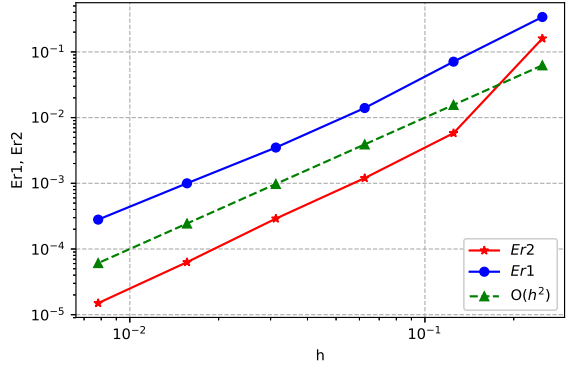}
		\caption{The order of convergence for scheme~\eqref{scheme}.}
		\label{fig:EOC}
	\end{center}
\end{figure}
\begin{table}[htbp]
	\caption{Values of $Er1$ and $Er2$ and their slopes for the problem in Subsection~\ref{subsec:eoc} by scheme~\eqref{scheme}.}
	\centering
	\begin{tabular}{l c c c c}
		\hline\hline
	$N$ & $Er1$ & $Er2$ & Slope of $Er1$ & Slope of $Er2$ \\ 
		\hline
		4& $3.4\times10^{-1}$ & $1.6\times10^{-1}$ & $-$ & $-$  \\
		8& $7.1\times10^{-2}$ & $5.8\times10^{-3}$ &2.2 & 4.8  \\
		16& $1.4\times10^{-2}$ &$1.2\times10^{-3}$ & 2.3 & 2.3   \\
		32& $3.5\times10^{-3}$ & $2.9\times10^{-4}$ & 2.0 & 2.0 \\ 
		64& $1.0\times10^{-3}$ & $6.3\times10^{-5}$ & 1.80 & 2.2 \\
		128& $2.8\times10^{-4}$ & $1.5\times10^{-5}$ & 1.84 & 2.1 \\
		\hline
	\end{tabular}
	\label{tab:PPer}
\end{table}
\subsection{Simulation with non-homogeneous porosity}\label{subsec:nonhomo_porosity}
In this subsection, we present two cases of numerical simulation for the fluid flow through the non-homogeneous porous media. 
\par
The purpose of the first case simulation is to understand the fluid flow in the two layers of porosity. This simulation motivated by the real condition of the geothermal reservoir which has porosity function of the depth. In the top of the reservoir, the value of porosity is large, while in the bottom, the value of porosity is small due to the existence of pressure which comes from the mass of the soils and rocks.
\par
We set $\Omega=(0,3)\times(0,1)$~[cm], $\Gamma_{1}= \left\lbrace (x_{1},x_{2});\ x_{1}=3, \, 0<x_{2}<1 \right\rbrace $, $ \Gamma_{0}=\partial\Omega / \overline{\Gamma}_{1}$, $f=0$, $g=u^{0}$ on $\Gamma_{0}$, $\Gamma_{2}=\emptyset$,  $T=5$ [s], $\rho=9.951\times 10^{-1}$~[gr/cm$^{3}]$, and $\mu=8.89\times 10^{-3}$~[dyn$\cdot$s/cm$^2]$.
We define the initial condition as  
\[
u^{0}=\eta(x_{1})
\begin{pmatrix}
	\frac{1}{4}-(x_{2}-\frac{1}{2})^{2}\\
	0
\end{pmatrix},
\]
where $\eta$ is defined by
\begin{equation}\label{eq:eta}
\eta(x_{1}) := 
	\left\{
	\begin{aligned}
	& \cos(\pi x_1)  &&  (0\leq x_{1}\leq 0.5), \\
	& \ 0 && (0.5< x_{1}).
	\end{aligned}
	\right.
\end{equation}
For the porosity $\phi$ we set
\begin{equation*}
	\phi(x)=0.4+0.4H_{\epsilon}(x_{2}-0.5),
\end{equation*}
where $\epsilon = \fz{1}{360}$ and $H_{\epsilon}$ is an approximate Heaviside function defined by
\[
	H_{\epsilon}(s)= 
	\left\{
	\begin{aligned} 
		& 1  &&  (s\geq\epsilon) , \\
		& \frac{1}{2}+\frac{1}{2}\left(\frac{s}{\epsilon}+\frac{1}{\pi}\sin\frac{\pi s}{\epsilon}\right) && (\left| s \right| < \epsilon), \\
		& 0  &&  (s\leq-\epsilon).
	\end{aligned}
	\right.
\]
For this case we run the simulation with division number $N=120$, $h=3/N$, $\tau=h$.
Since we have a layer of $\phi$ on $x_{2}=1/2$, we employ a mesh whose mesh size near $x_{2}=1/2$ is chosen as around $1/720$.
To aid the understanding of the problem setting in this simulation, the boundary conditions and the porosity are illustrated together with the finite element mesh on $\Omega$ in Figure~\ref{fig:mesh}.
\begin{figure}[htbp]
	\centering
	\includegraphics[width=3.5in]{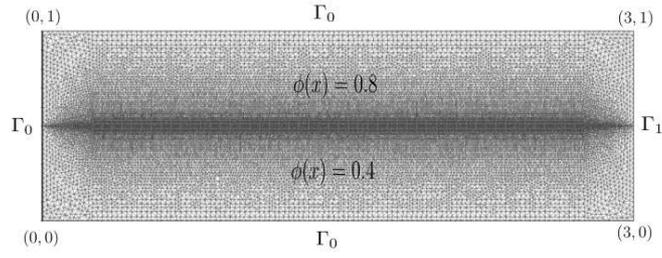}
	\caption{The boundary conditions and the finite element mesh.}\label{fig:mesh}
\end{figure}
\begin{figure}[htbp]
	\begin{minipage}[b]{0.49\linewidth}
		\centering
		\includegraphics[width=1.\linewidth]{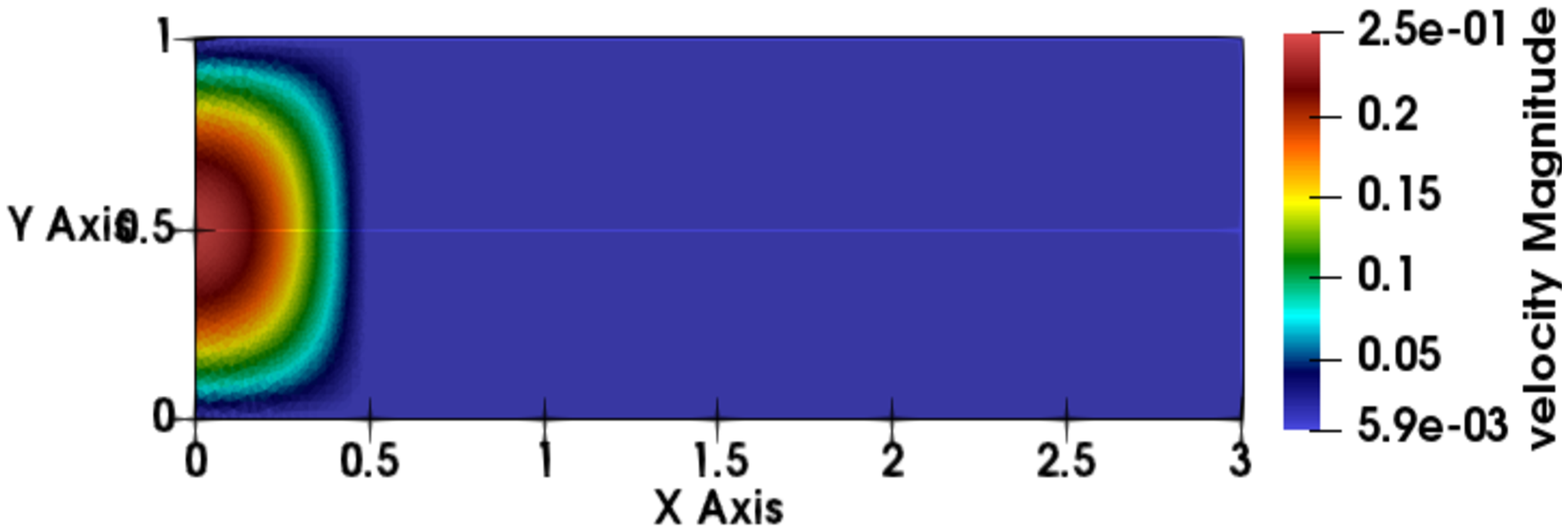}
		{\text{(a) $t=0.0$} [s]}
	\end{minipage}
	\begin{minipage}[b]{0.49\linewidth}
		\centering
		\includegraphics[width=1.\linewidth]{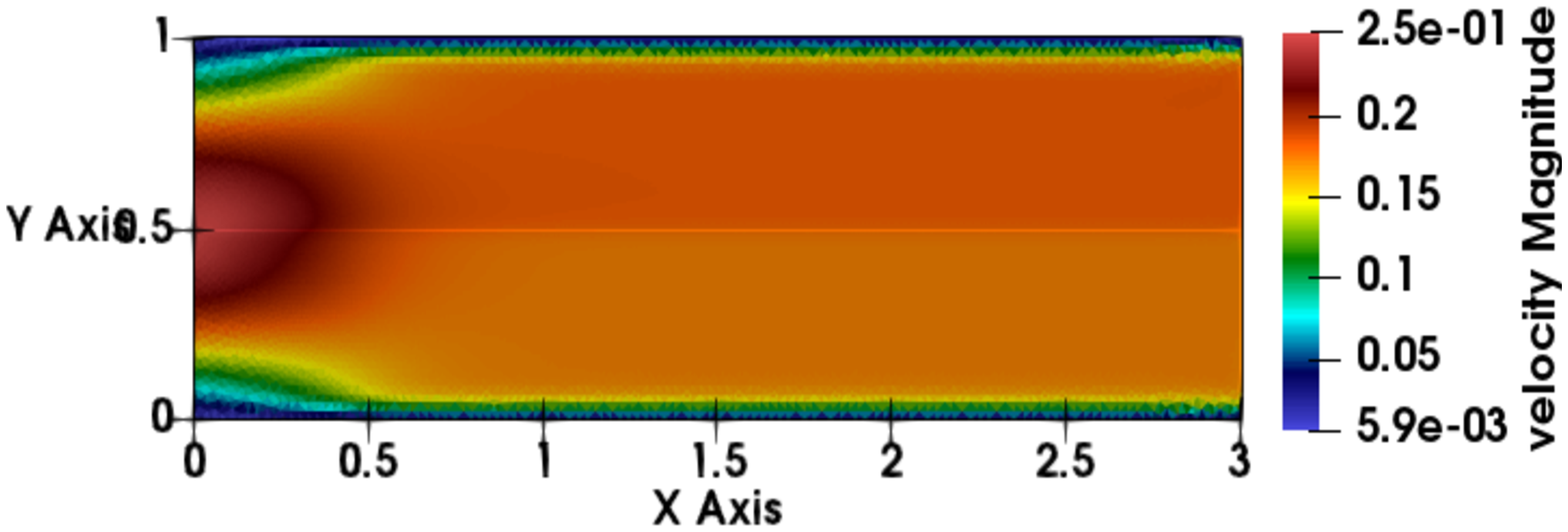}
		{  \text{(b) $t=0.083$ [s]}}
	\end{minipage}
	\begin{minipage}[b]{0.49\linewidth}
		\centering
		\includegraphics[width=1.\linewidth]{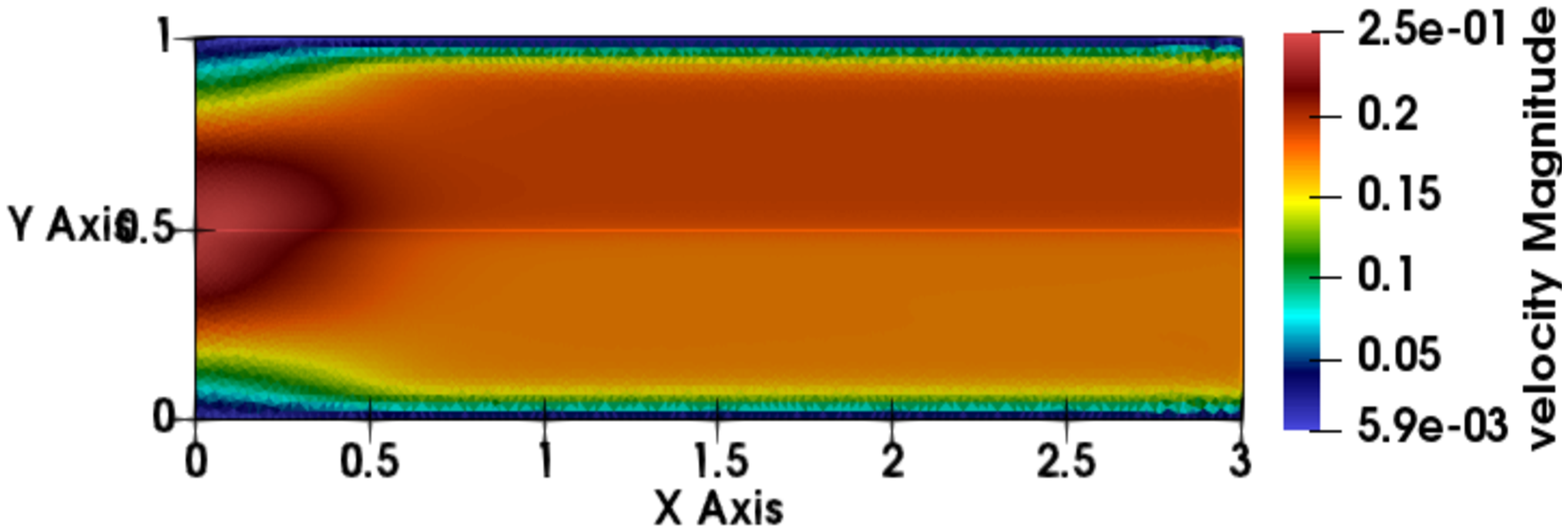}
		{  \text{(c) $t=0.16$ [s]}}
	\end{minipage}
	\begin{minipage}[b]{0.49\linewidth}
		\centering
		\includegraphics[width=1.\linewidth]{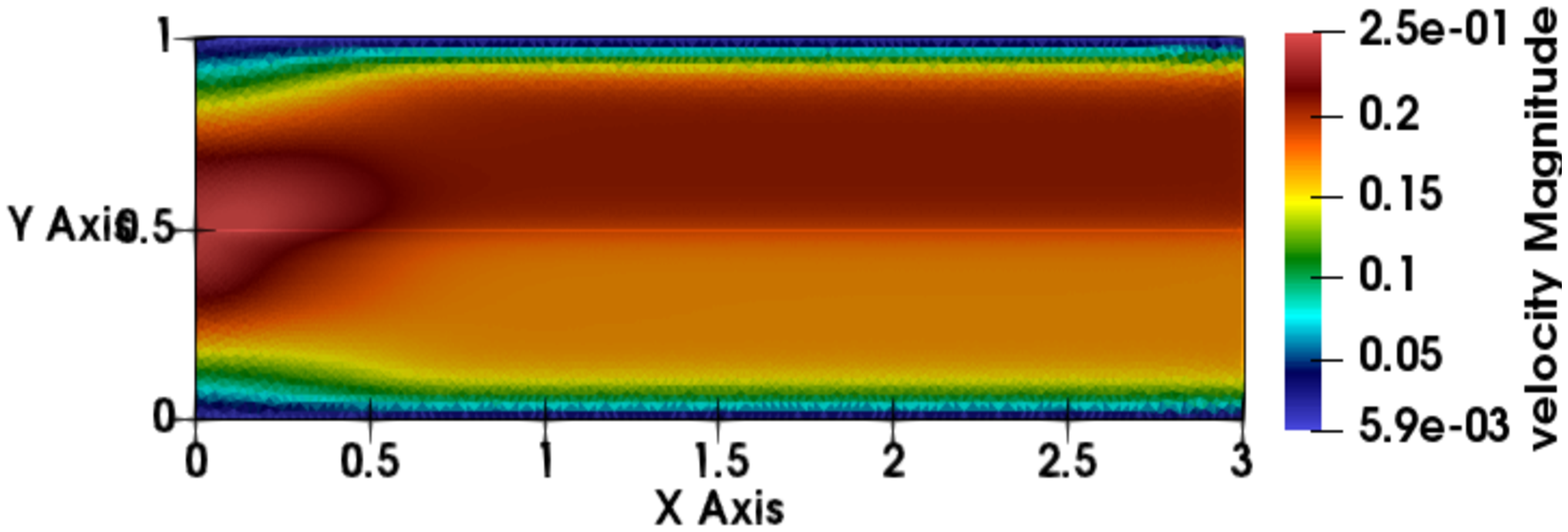}
		{  \text{(d) $t=0.33$ [s]}}
	\end{minipage}
	\begin{minipage}[b]{0.49\linewidth}
		\centering
		\includegraphics[width=1.\linewidth]{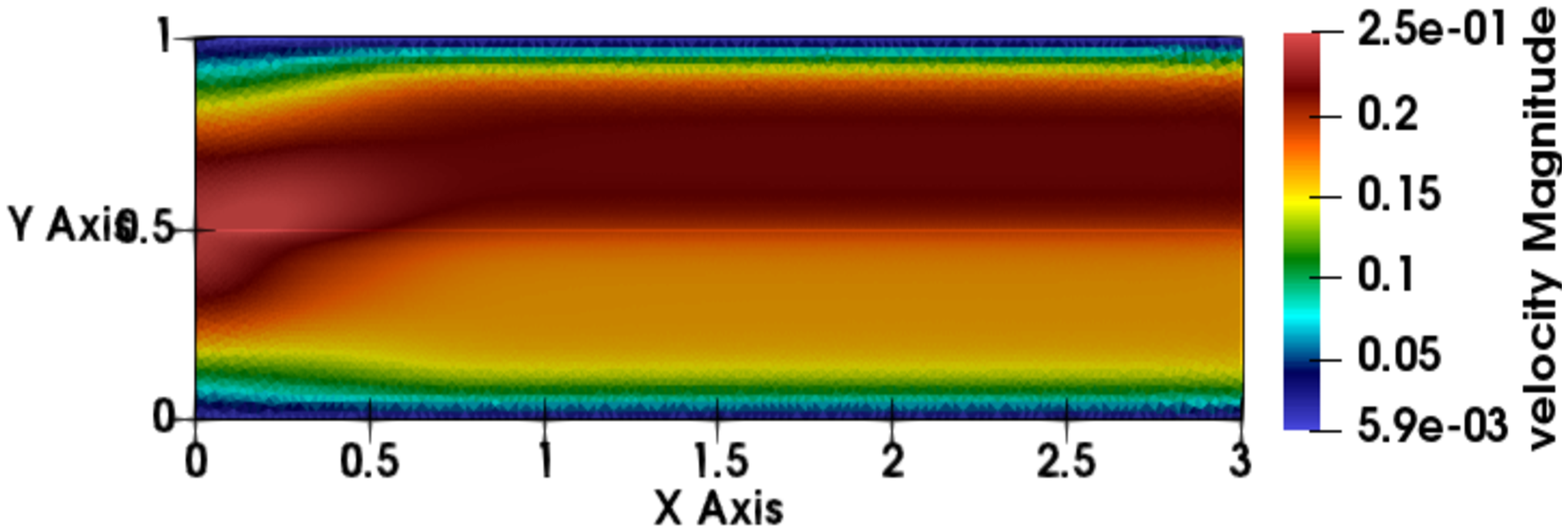}
		{  \text{(e) $t=0.5$ [s]}}
	\end{minipage}
	\begin{minipage}[b]{0.49\linewidth}
		\centering
		\includegraphics[width=1.\linewidth]{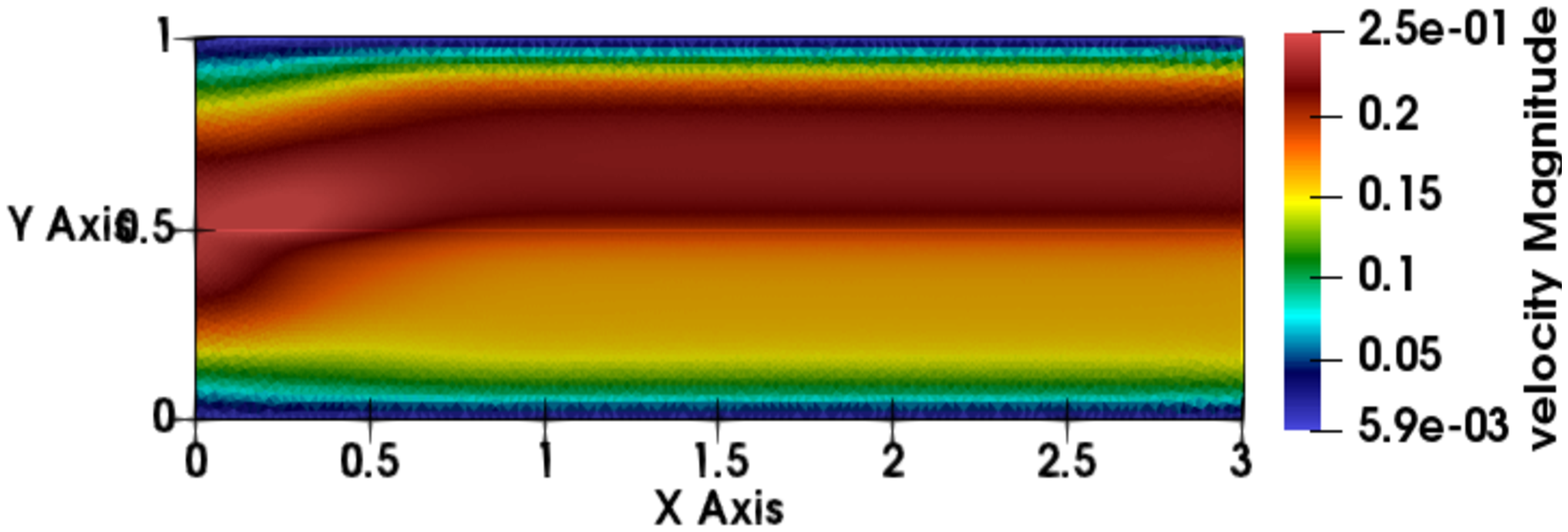}
		{  \text{(f) $t=0.66$ [s]}}
	\end{minipage}
	\begin{minipage}[b]{0.49\linewidth}
		\centering
		\includegraphics[width=1.\linewidth]{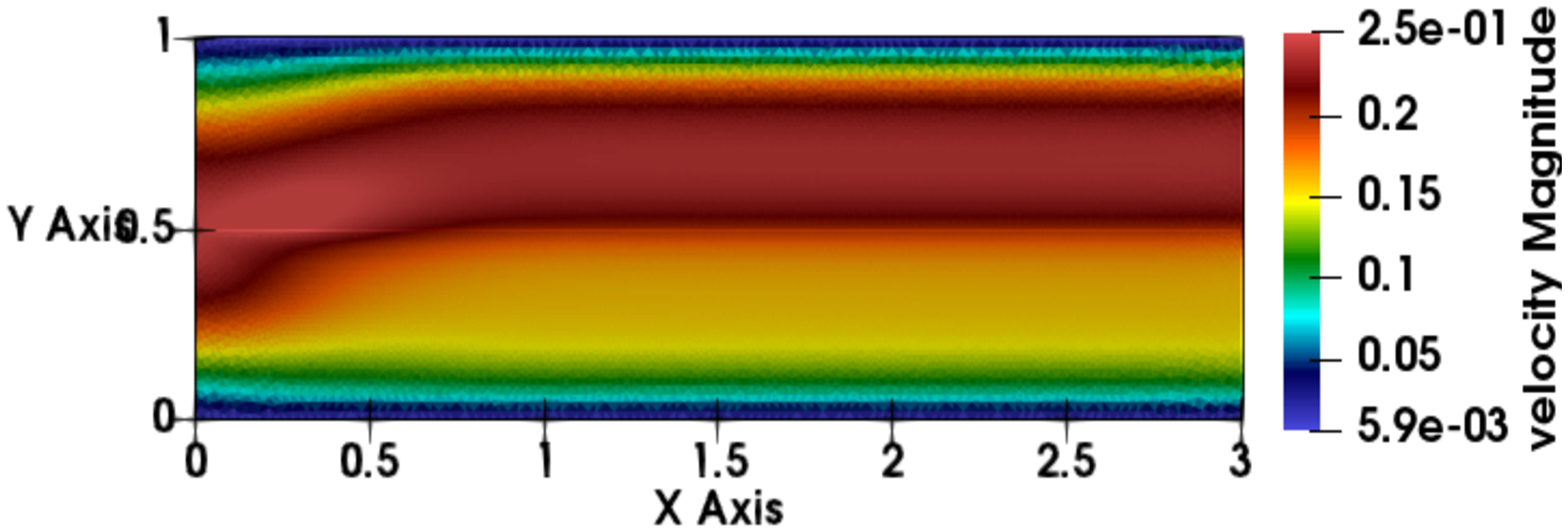}
		{  \text{(g) $t=0.83$ [s]}}
	\end{minipage}
	\begin{minipage}[b]{0.49\linewidth}
		\centering
		\includegraphics[width=1.\linewidth]{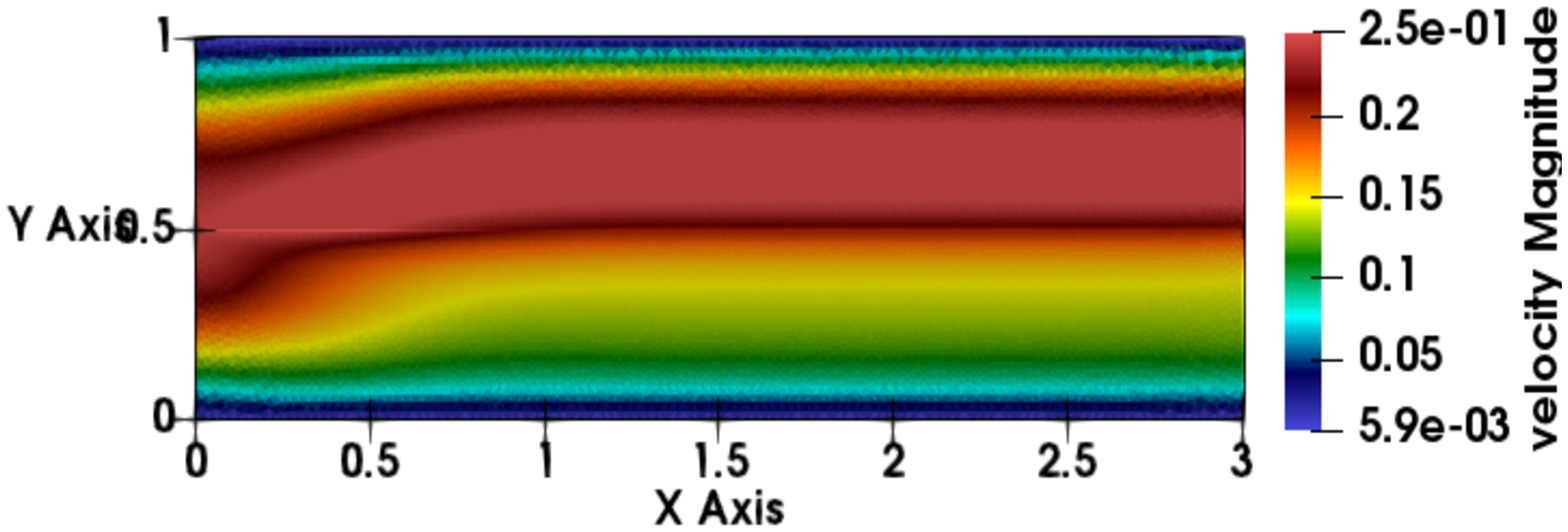}
		{  \text{(h) $t=1.6$ [s]}}
	\end{minipage}
	\begin{minipage}[b]{0.49\linewidth}
		\centering
		\includegraphics[width=1.\linewidth]{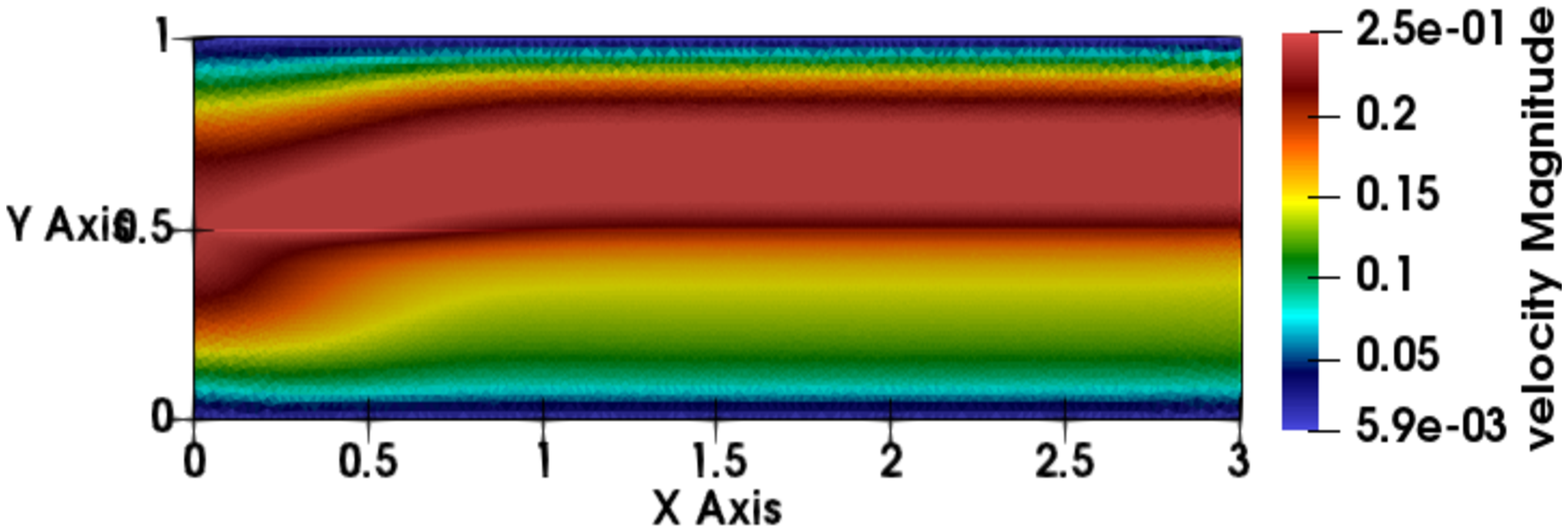}
		{  \text{(i) $t=3.3$ [s]}}
	\end{minipage}
	\begin{minipage}[b]{0.49\linewidth}
		\centering
		\includegraphics[width=1.\linewidth]{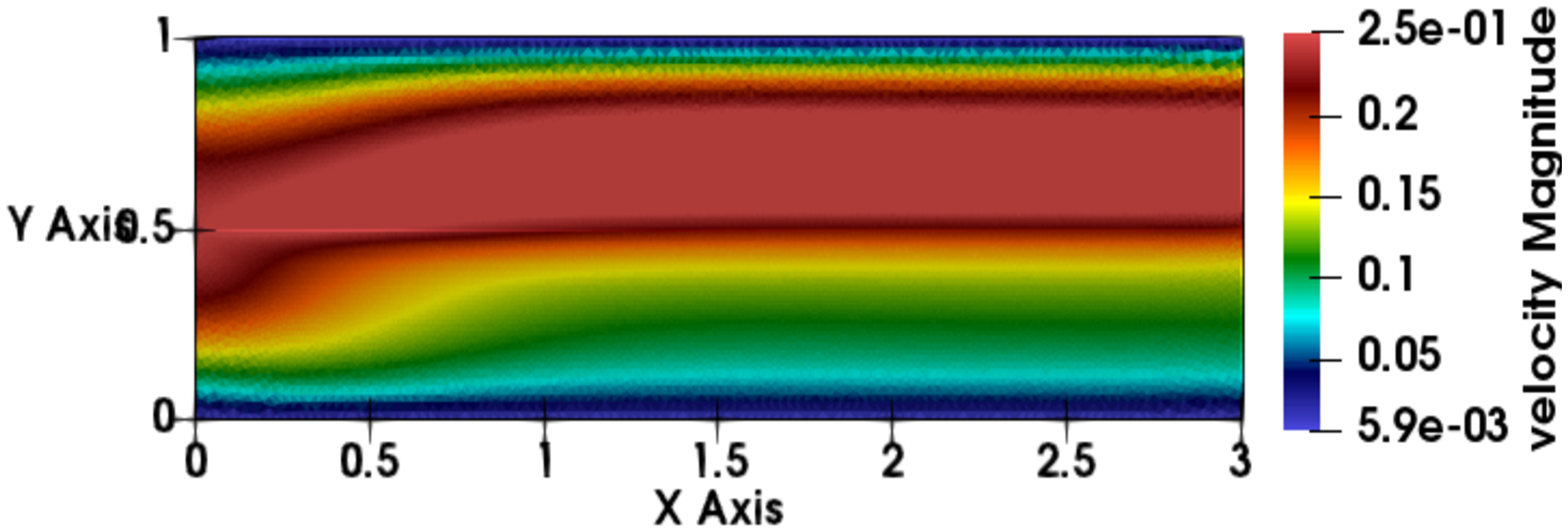}
		{  \text{(j) $t=5.0$ [s]}}
	\end{minipage}
	\caption{Time evolution of velocity magnitude.}
	\label{fig:ex1}
\end{figure}
\par
The results of the first case simulation are presented in Figure~\ref{fig:ex1}. Figure~\ref{fig:ex1}-(a) is the initial condition of the simulation. From this figure, we can see the profile distribution of velocity is symmetric. As long as the time increasing, the profile distribution becomes asymmetric; this happens because of the difference of values of the porosity. From equation~\eqref{def:F_and_K}, it can be understood that high porosity implies high permeability. High permeability means the resistance of fluids to flow is small so that the fluid can flow faster rather than the area with small porosity.
It clearly can be seen in (c)-(j) in Figure~\ref{fig:ex1}, the flow in the top layer with $\phi=0.8$ is faster than that in the bottom layer with $\phi=0.4$.
This behavior of our numerical results has a good agreement with the natural flow in the simple case of the porous media qualitatively.  
\par
The purpose of the second simulation is to understand the fluid flow in the complex value of porosity.
This simulation is motivated by the real condition of the porosity distribution in the rock structure, such as in carbonate rock, where the value of porosity is irregular.
For this simulation, we set $\Omega=(0,3\pi)\times(0,\pi)$~[cm], $T=5$~[s], $\rho=9.951\times 10^{-1}$~[gr/cm$^{3}]$, $\mu=8.89\times 10^{-3}$~[dyn.s/cm$^2]$, $f=0$, and
\[
u^{0} = \eta(x_{1})
\begin{pmatrix}
0.01\left( \frac{\pi^{2}}{4}-\left( x_2-\frac{\pi}{2}\right)^{2} \right) \\
0
\end{pmatrix},
\]
where $\eta$ is the function defined in~\eqref{eq:eta}.
For the porosity $\phi$ we set
\[
\phi (x) = \frac{\gamma_1 - \gamma_0}{2}\sin(2x_{2})\cos(2x_{1})+\frac{\gamma_1 + \gamma_0}{2},
\]
where $\gamma_0 = 0.15$ and $\gamma_1 = 0.65$.
For this case we run the simulation with division number $N=300$, $h=3\pi/N$, $\tau=h$.
To aid the understanding of problem setting in this simulation, we plotted the distribution function of porosity in the computational domain in Figure~\ref{fig:ex2_porosity}.  
\begin{figure}[htbp]
	\centering
	\includegraphics[width=3.0in]{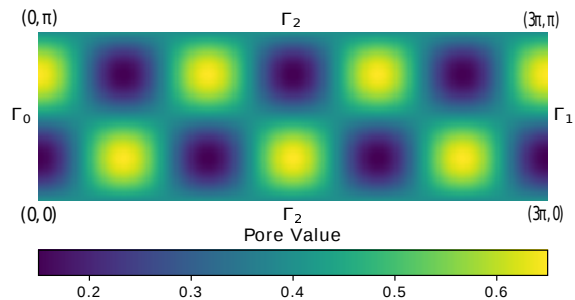}
	\caption{Computation domain and porosity value distribution}\label{fig:ex2_porosity}
\end{figure}
\begin{figure}[htbp]
	\begin{minipage}[b]{0.49\linewidth}
		\centering
		\includegraphics[width=1.\linewidth]{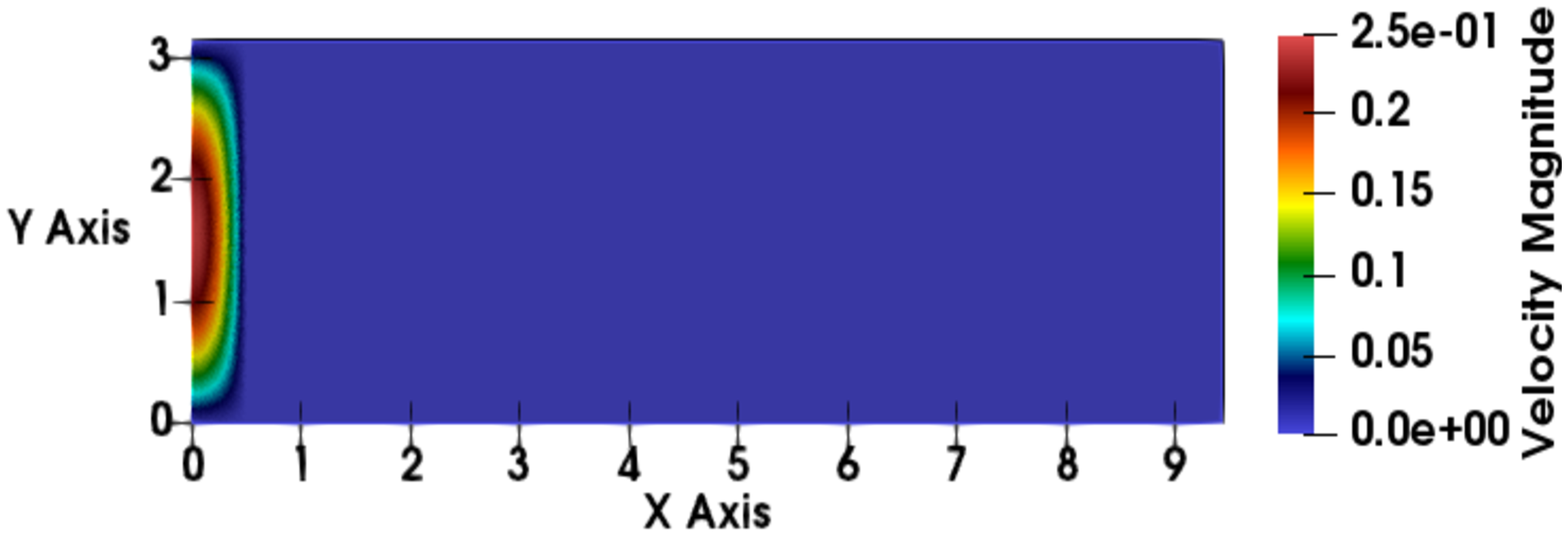}
		{  \text{(a) $t=0.0$ [s]}}
	\end{minipage}
	\begin{minipage}[b]{0.49\linewidth}
		\centering
		\includegraphics[width=1.\linewidth]{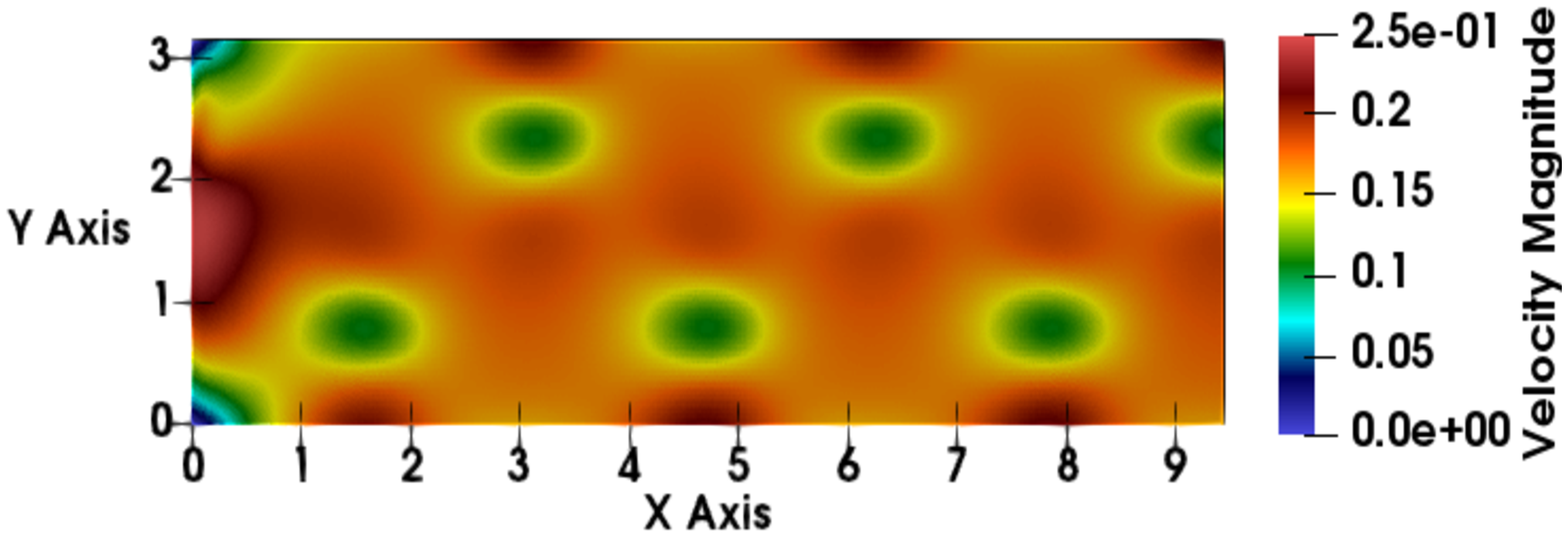}
		{  \text{(b) $t=0.083$ [s]}}
	\end{minipage}
	\begin{minipage}[b]{0.49\linewidth}
		\centering
		\includegraphics[width=1.\linewidth]{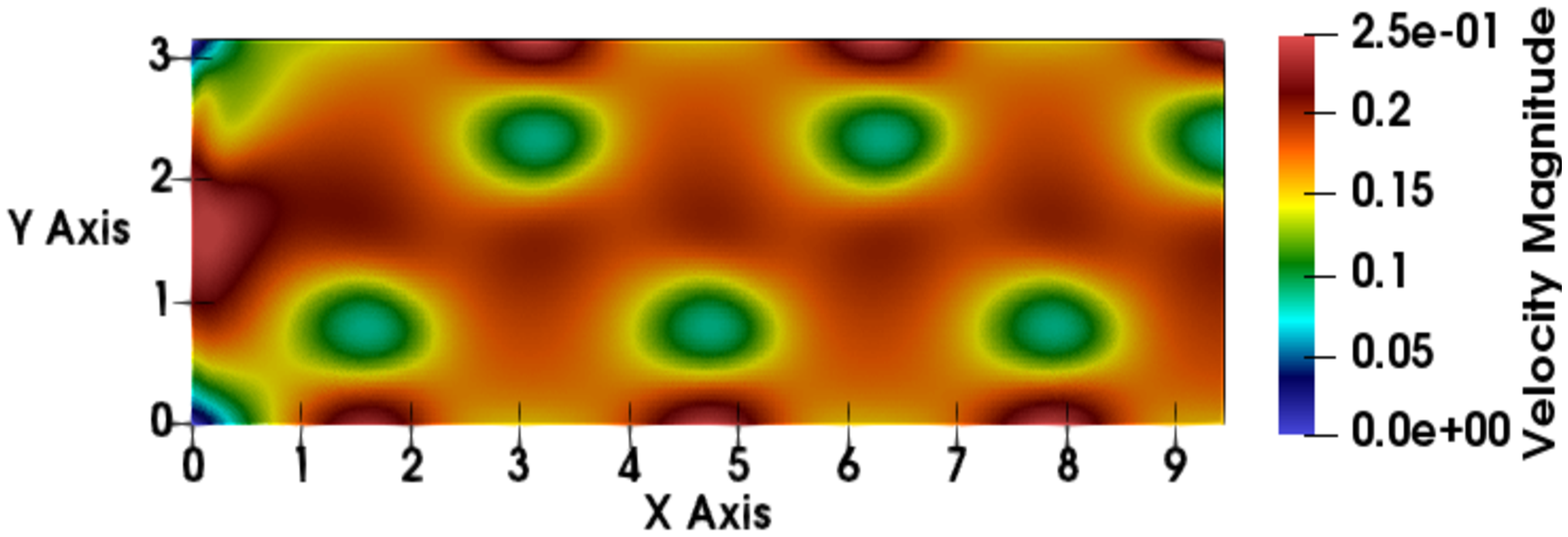}
		{  \text{(c) $t=0.16$ [s]}}
	\end{minipage}
	\begin{minipage}[b]{0.49\linewidth}
		\centering
		\includegraphics[width=1.\linewidth]{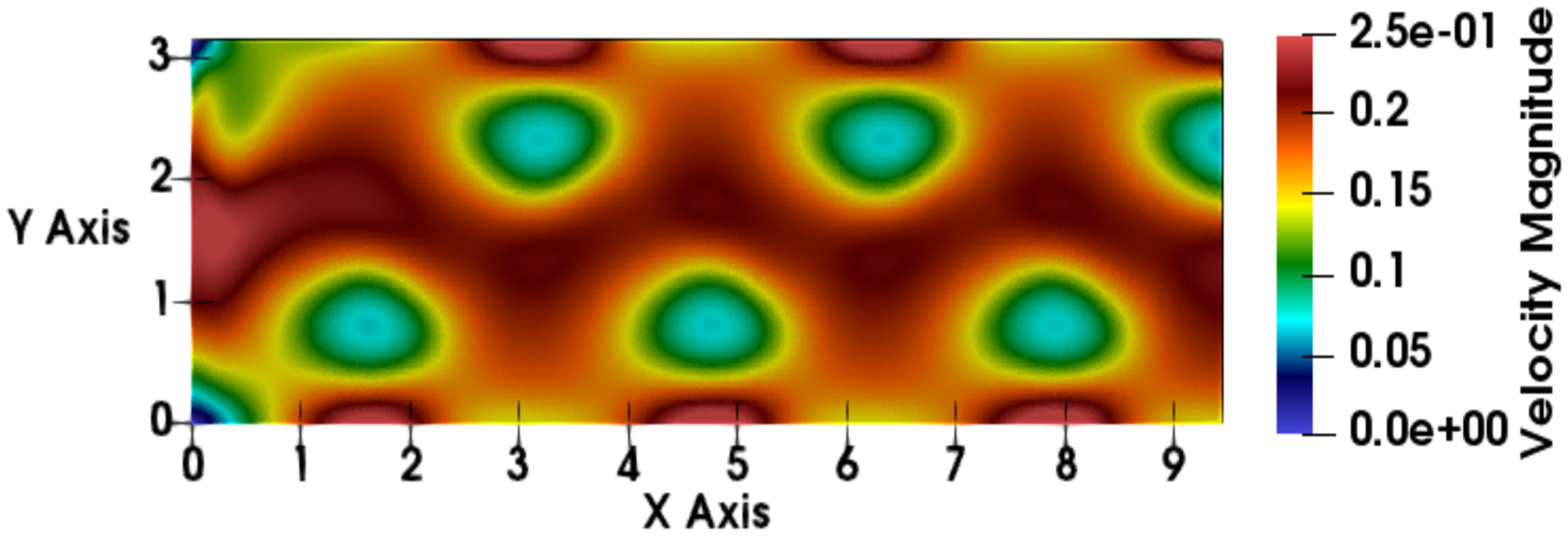}
		{  \text{(d) $t=0.33$ [s]}}
	\end{minipage}
	\begin{minipage}[b]{0.49\linewidth}
		\centering
		\includegraphics[width=1.\linewidth]{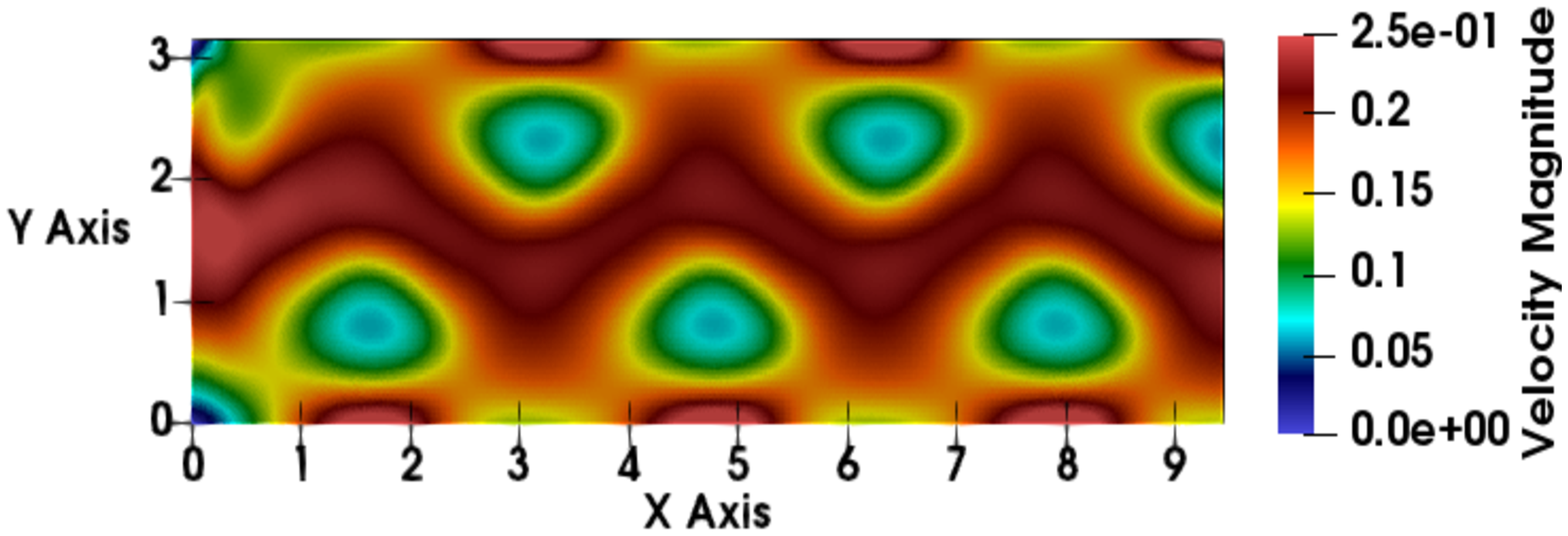}
		{  \text{(e) $t=0.5$ [s]}}
	\end{minipage}
	\begin{minipage}[b]{0.49\linewidth}
		\centering
		\includegraphics[width=1.\linewidth]{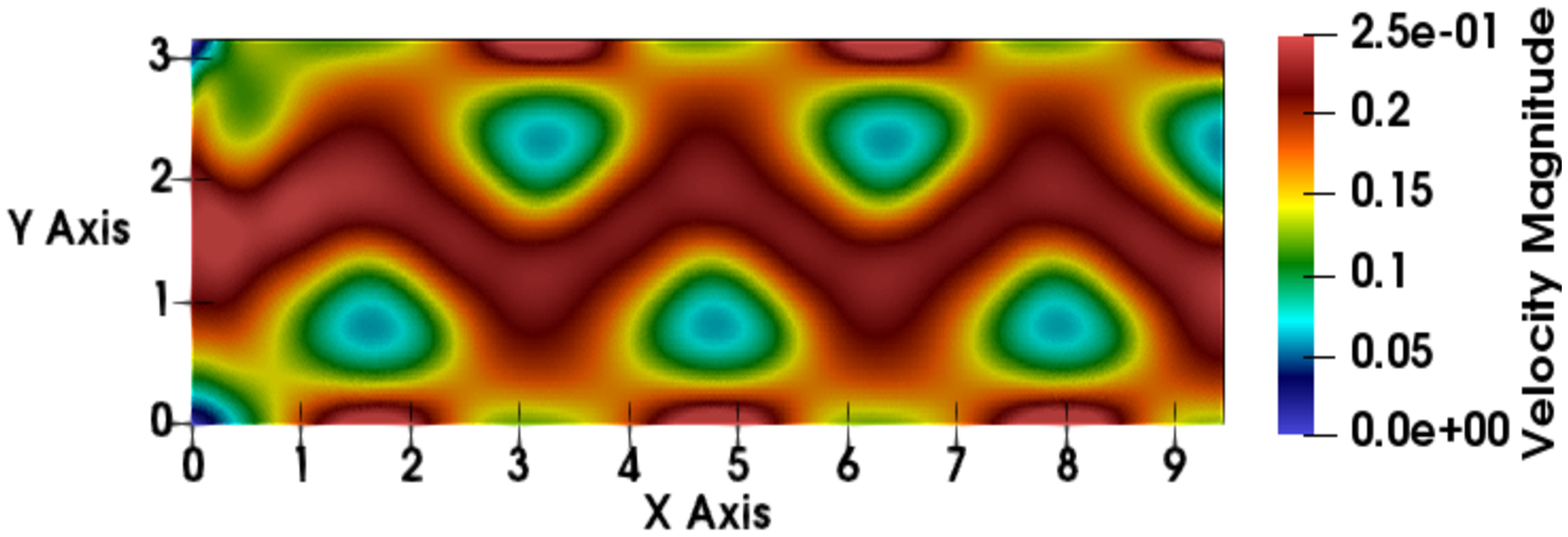}
		{  \text{(f) $t=0.66$ [s]}}
	\end{minipage}
	\begin{minipage}[b]{0.49\linewidth}
		\centering
		\includegraphics[width=1.\linewidth]{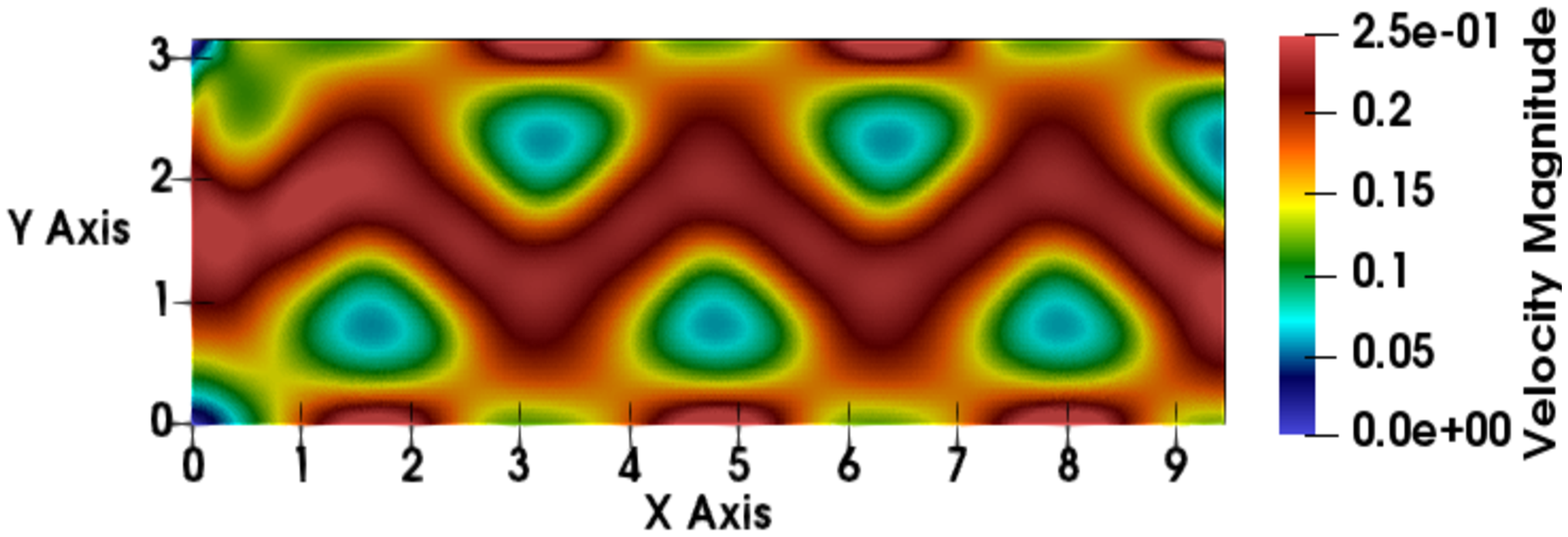}
		{  \text{(g) $t=0.83$ [s]}}
	\end{minipage}
	\begin{minipage}[b]{0.49\linewidth}
		\centering
		\includegraphics[width=1.\linewidth]{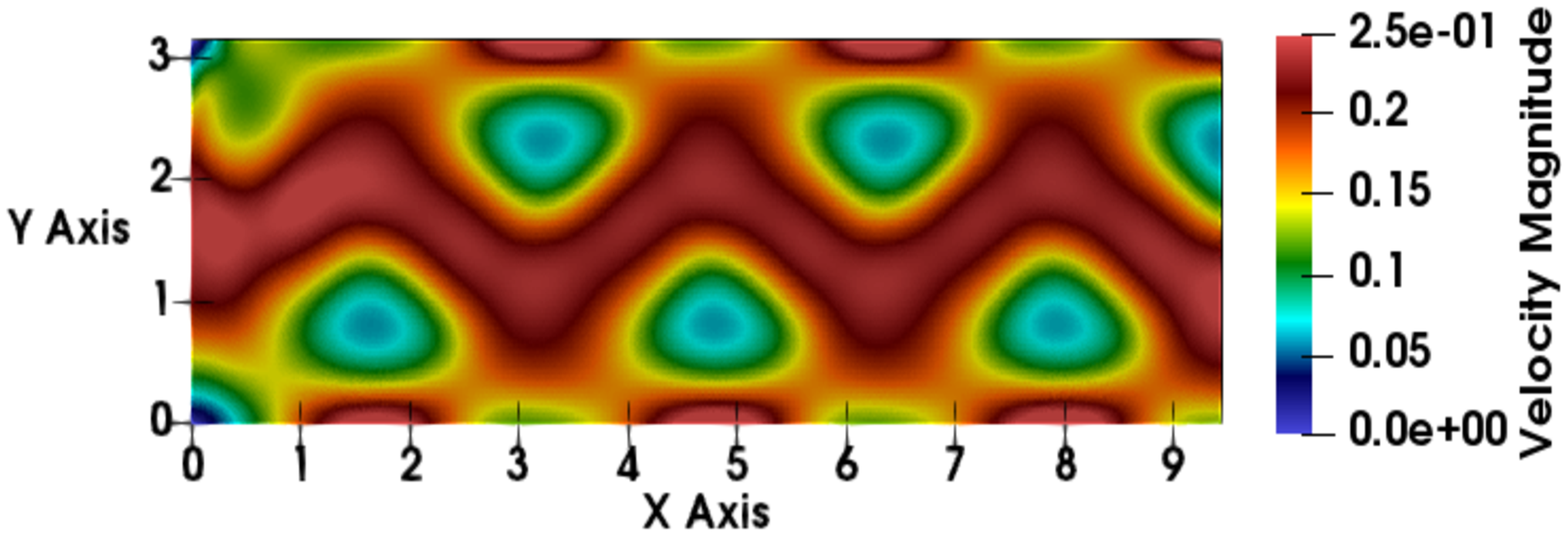}
		{  \text{(h) $t=1.6$ [s]}}
	\end{minipage}
	\begin{minipage}[b]{0.49\linewidth}
		\centering
		\includegraphics[width=1.\linewidth]{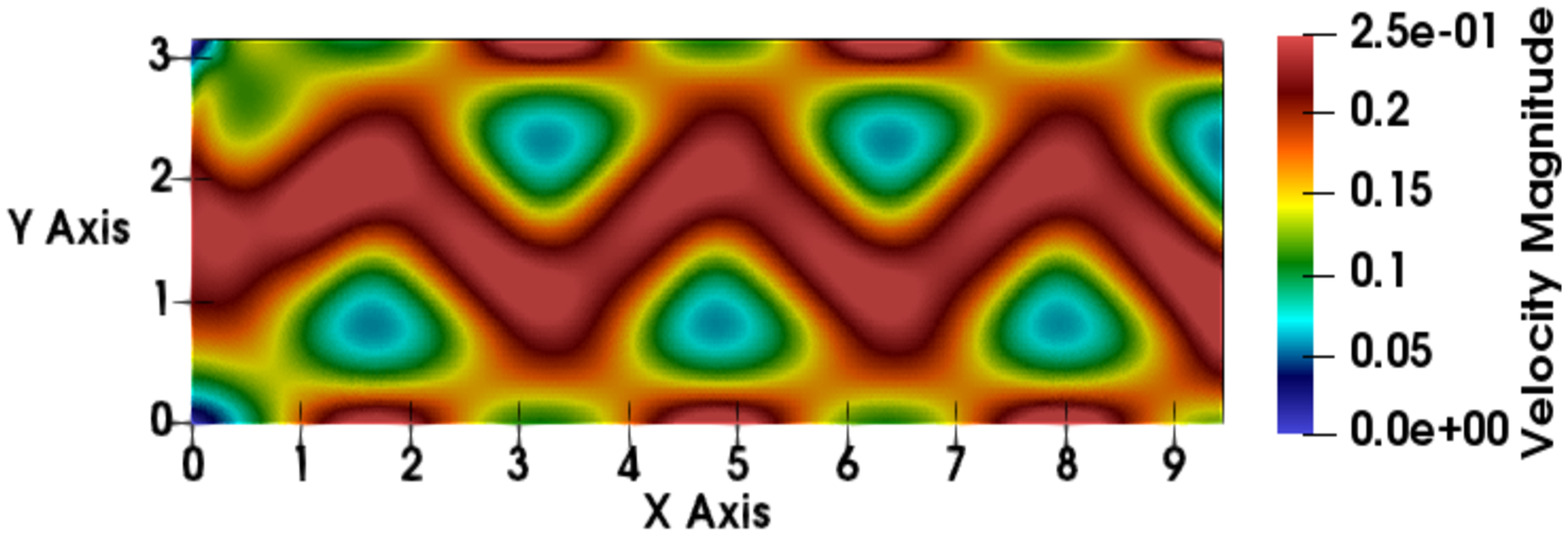}
		{  \text{(i) $t=3.3$ [s]}}
	\end{minipage}
	\begin{minipage}[b]{0.49\linewidth}
		\centering
		\includegraphics[width=1.\linewidth]{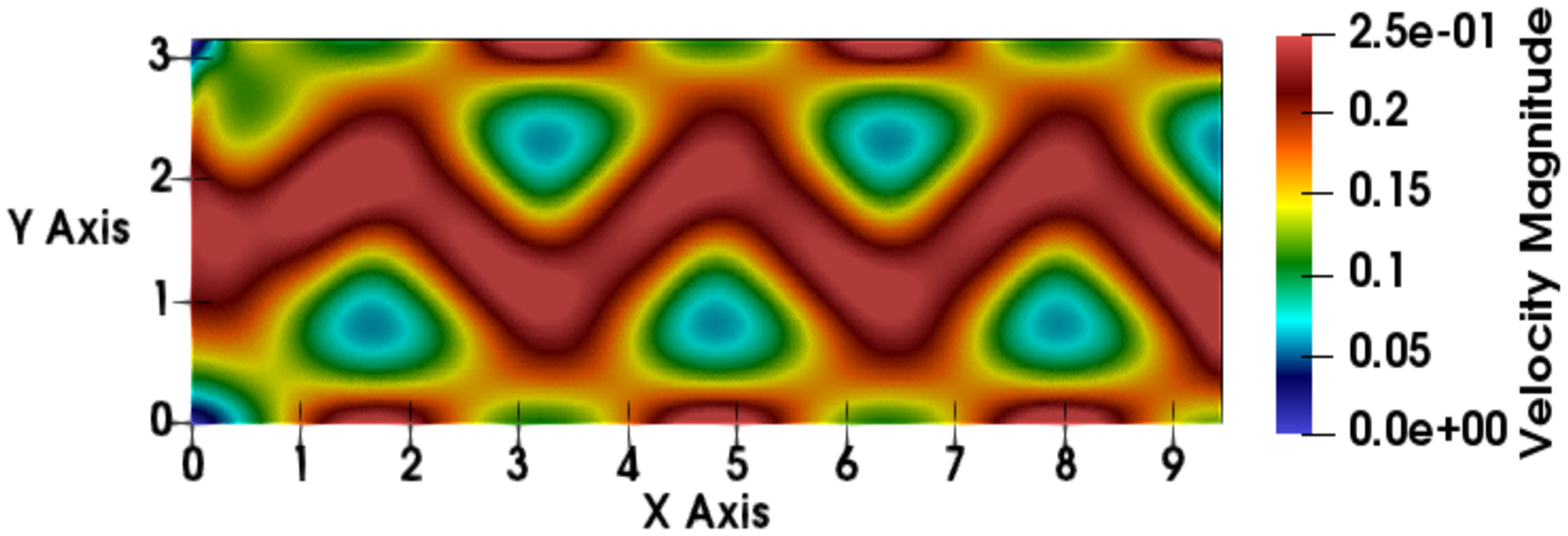}
		{  \text{(j) $t=5.0$ [s]}}
	\end{minipage}
	\caption{Time evolution of magnitude velocity.}
	\label{fig:ex2}
\end{figure}
\par
The results of the second case simulation are presented in Figure~\ref{fig:ex2}.
Figure~\ref{fig:ex2}-(a) is the initial velocity magnitude of the simulation.
From Figure~\ref{fig:ex2}, we can see that the fluid is flowing faster in the area which has a large porosity; for the area which has small porosity, the fluid is flowing slowly.
In the area which has small porosity, we can see the gradation motion of the fluid clearly; this fact emphasizes us that scheme~\eqref{scheme} can deal with the irregular pattern of porosity. 
Figure~\ref{fig:ex2} has a good agreement with the natural flow in the irregular design of porous media qualitatively.
\section{Conclusion}
We have proved the $L^{2}$-stability for the model proposed by Hsu and Cheng for fluid flow through porous media, where the non-Darcy drag force $-\rho F(\phi)\phi|u|u/\sqrt{K(\phi)}$ played an essential role. We also have introduced a new Lagrange--Galerkin scheme with the Adams--Bashforth method for solving that model numerically. Our new numerical scheme has second-order accuracy both in space and in time.
From the numerical simulation presented in Subsection~\ref{subsec:nonhomo_porosity} we have seen that the results have a good agreement with the natural flow in the simple and irregular cases of the porous media qualitatively.  
\section*{Acknowledgments}
This work is partially supported by
MEXT (Ministry of Education, Culture, Sports, Science, and Technology) scholarship,
JSPS KAKENHI Grant Number JP18H01135,
JSPS A3 Foresight Program,
and JST PRESTO Grant Number~JPMJPR16EA.

\end{document}